\newtheorem{theorem}{Theorem}[section]
\newtheorem{lemma}[theorem]{Lemma}
\newtheorem{proposition}[theorem]{Proposition}
\newtheorem{corollary}[theorem]{Corollary}
\newtheorem{definition}[theorem]{Definition}
\newtheorem{remark}{Remark}
\def\Id{\mbox{Id}}
\def\E{\mathbf E}
\def\dist{\operatorname{d}}
\def\N{{\mathbb N}}
\def\fs{{W}}
\def\cm{H}
\def\X{\chi}
\def\vect{\operatorname{span}}
\def\PP{P} 
\def\esp#1{\mathbf E\left[#1\right]}
\def\P{\mathbf P}
\def\D{\mathbb D}
\def\dif{\text{ d}}
\def\R{\mathbb{R}}
\def\car{\mathbf{1}}
\def\dx{\dif x}
\def\dz{\dif z}
\def\dy{\dif y}
\def\du{\dif u}
\def\dnn{\dif \nu^\sharp_n}
\def\SP{\Sigma}
\newcommand{\dom}{\operatorname{dom}}
\def\Lip{\operatorname{Lip}}
\newcommand\pr[1]{{\mathbf P}\left[#1\right]}
\newcommand\suite[3]{\left(#1_{#2}:\,#2\geq #3\right)}
\newcommand\dive[1]{\nabla^*_{#1}}
\newcommand\Dive[2]{\dive{#1}\left(#2\right)}
\def\<{\left \langle}
\def\>{\right\rangle}
\newcommand{\Pois}{N} 
\def\eust#1{\textcolor{magenta}{#1}}
\begin{document}
\title{Stein's method for diffusive limits of queueing
  processes
}

\author{E. Besan\c con}
\address{LTCI, Telecom Paris, I.P. Paris, France} \email{eustache.besancon@mines-telecom.fr}
\email{coutin@math.univ-toulouse.fr}
\author{L. Decreusefond}
\address{LTCI, Telecom Paris, I.P. Paris, France} \email{laurent.decreusefond@mines-telecom.fr}
\author{P. Moyal}
\address{Université de Lorraine, France} \email{pascal.moyal@univ-lorraine.fr}


\maketitle

\begin{abstract}
  Donsker Theorem is perhaps the most famous invariance principle result for
  Markov processes. It states that when properly normalized, a random walk
  behaves asymptotically like a Brownian motion. This approach can be extended
  to general Markov processes whose driving parameters are taken to a limit,
  which can lead to insightful results in contexts like large distributed
  systems or queueing networks. The purpose of this paper is to assess the rate
  of convergence in these so-called diffusion approximations, in a queueing
  context. To this end, we extend the functional Stein method introduced for the
  Brownian approximation of Poisson processes, to two simple examples: the
  single-server queue and the infinite-server queue. By doing so, we complete
  the recent applications of Stein's method to queueing systems, with results
  concerning the whole trajectory of the considered process, rather than its
  stationary distribution.
\end{abstract}

\keywords{Diffusion approximation, Queueing systems, Stein's method}

\section{Introduction}
\label{sec:intro}

The Markovian analysis of queueing systems often leads to stochastic processes
with an intricate evolution, for which the classical approach, which for
instance requires the computation of the stationary distribution, is
intractable. To gain some insights on the behavior of the process, it is then
customary to push the parameters to their limit and analyze the limiting process
which hopefully will reveal the inner structure of the model under analysis.
Diffusion approximations, as they are called, have been and still are the
subject of numerous papers (see \cite{robert_stochastic_2003,whitt_book} and
references therein to get a glimpse of the very rich literature on the subject).
The most naive example which comes to mind is the convergence of a normalized
Poisson process to a Brownian motion $B$: Letting $N^{\lambda}$ be a Poisson
process of intensity $\lambda$, we have that
\begin{equation}\label{eq_questa_BDM:6}
  \tilde{N}^{\lambda}\left(=: t \mapsto \frac{1}{\sqrt{\lambda}}(N^{\lambda}(t)-\lambda t)\right) \xrightarrow[\lambda\to \infty]{\text{dist. in } \D_{T}} B,
\end{equation}
where the convergence holds in distribution on the Skorokhod space as $\lambda$
goes to infinity. As the convergence in distribution is induced by a metric over
the set of probability measures, Eqn.~\eqref{eq_questa_BDM:6} just says that the
distance between the distribution of $\tilde{N}^{\lambda}$ and the distribution
of $B$ over $\D_{T}$ tends to zero. The next step is to determine the rate at
which this limit holds. The first study addressing this issue was due to Barbour
in the 90's \cite{barbour_steins_1990}. Since then, no papers on this subject
appeared until
\cite{coutin_steins_2012,kasprzak_diffusion_2017,shih_steins_2011}. These four
papers share the same common ground, relying on the so-called Stein method (SM)
\cite{stein_bound_1972}, see Section~\ref{sec:nutshell} for a modern
introduction. It is based on the fact that the topology of convergence in
distribution over a separable metric space $\X$ can be defined through the
distance
\begin{equation*}
  \dist(\mu,\nu)=\sup_{f\in \Lip-1} \left( \int_{\X} f\dif \mu-\int_{\X} f\dif \nu \right),
\end{equation*}
where $\Lip-1$ is the set of Lipschitz continuous function: $f\, :\,\X\to\R$
such that
\begin{equation*}
  |f(x)-f(y)|\le \dist_{\X}(x,y), \ \forall x,y\in \X.
\end{equation*}
An important avenue of literature has been dedicated to Stein's method in the
case $\X=\R$. Close to the class of models we have in mind, let us mention the
fruitful recent applications of the SM, to assess the rate of convergence of the
stationary distributions of various processes involved in queueing: Erlang-A and
Erlang-C systems in \cite{Braverman2016}; a system with reneging and phase-type
service time distributions (in which case the target distribution is the
stationary distribution of a piecewise Ornstein-Uhlenbeck process) in
\cite{Braverman2017}, single-server queues in heavy-traffic in \cite{GW19}.

When $\X$ is no longer $\R$, however, the development of the SM is much more
involved.
The main contribution of this work is to present applications of the Stein's
method to estimate the rate of convergence in functional CLT's arising in
queueing. Specifically, we complete existing functional Central Limit Theorems
of classical queueing systems (namely the M/M/1 queue and the 'pure delay'
M/M/$\infty$ system) by assessing the rate of convergence to the diffusion
limit, using Stein's method at the level of the whole stochastic process. These
two examples thus provide good illustrations of how the SM can be fruitfully
applied in a queueing context, at the process level. By completing two classical
asymptotic results with a simple rate of convergence estimate, for classes of
functions that have a practical meaning, the present work can thus constitute a
promising starting point for similar development regarding a larger class of
queueing systems.

This paper is organized as follows. In Section~\ref{sec:results}, we state our
main results for the diffusion approximation of the M/M/1 and M/M/$\infty$
queues. In Section~\ref{sec:interpol}, we introduce the intermediate processes,
i.e. the affine interpolation of both the Markov process under study and the
limit Brownian motion. Then, we estimate the error done by replacing the
original processes by their affine interpolations. Section~\ref{sec:interpol} is
devoted to the functional Stein method with which we control the distance
between the distributions of the interpolations defined above. The specific
calculations for the M/M/1 queue are done in Section~\ref{sec:proofMM1} and in
Section~\ref{sec:proofMMinfty} for the M/M/$\infty$ system. The Appendix
contains the proofs of two technical lemmas.
\section{The results}
\label{sec:results}
In this section we present our main results. In Theorems \ref{thm:mainMM1} and
\ref{thm:mainMMinfty} below, we provide bounds for the speed of convergence in
the diffusion approximation of two standard queueing systems: the single-server
and the infinite server queues, respectively. In what follows, for any $T>0$,
$\D:=\D_T$ denotes the Skorokhod space of càdlàg functions from $[0,T]$ to $\R$.
(We omit the dependance in $T$ for notational simplicity.) The
functional space $\Sigma$, to be properly defined in Definition \ref{def:Sigma}
below, is a subspace of the space of 1-Lipschitz continuous from $\D$ to $\R$. We denote for any
$U,V$ in $\D$,
\begin{align}
  \dist_{\Sigma}\left(U,V\right) &=\sup_{F\in \Sigma} \left|\esp{F\left(U\right)}-\esp{F\left(V\right)}\right|\label{eq:defdistSP}.
\end{align}
The distance $\dist_\Sigma$ is then the appropriate tool to introduce the
results to come.

\subsection{The M/M/1 queue}
\label{sec:MM1}
We first consider the classical M$_\lambda$/M$_\mu$/$1$ queue, that is, a single
server with infinite buffer, where the arrival is Poisson of intensity
$\lambda$, and the service times are i.i.d. from the exponential law
$\varepsilon(\mu)$. For all $t\ge 0$, we let $L^\dag(t)$ denote the number of
customers in the system (including the one in service, if any) at time $t$. The
process $L^\dag$ is clearly birth and death, and is ergodic if and only if
$\lambda/\mu<1$. If the initial size of the system is $x \in \N$, then
$L^{\dag}$ obeys the SDE
\begin{equation}
  \label{eq:SDEMM1}
  L^{\dag}(t) =x+\Pois_{\lambda}(t)-\int_{0}^{t}\mathbf{1}_{\{L^{\dag}(s^{-})>0\}}\Pois_{\mu}(ds),\quad t\ge 0,
\end{equation}
for two independent Poisson processes $\Pois_{\lambda}$ and $\Pois_{\mu}$. This
process is rescaled by accelerating time by a factor~$n$, while multiplying the
initial value, and then dividing the number of customers in the system at any
time by the same factor. For all $n\in\N^*$, the resulting normalized process
$\overline{L^\dag}$ then satisfies
\begin{align*}
  \overline {L^{\dag}_{n}}(t)                                &=x+\frac{\Pois_{n\lambda
                                 }(t)}{n}-\frac{\Pois_{n\mu}(t)}{n}+\frac{1}{n}\int_{0}^{t}\mathbf{1}_{\{L^{\dag}_{n}(ns^{-})=0\}}\dif \Pois_{n\mu
                                 }(s),\quad t\ge 0.
\end{align*}
It is a well established fact (see e.g. Proposition 5.16 in
\cite{robert_stochastic_2003}) that the sequence $\left(
  \overline{L^{\dag}_{n}}:\, n\ge 1\right)$ converges in probability and
uniformly over compact sets, to the deterministic function
\[\overline{L^{\dag}}\, :\,t \longmapsto \left(x+\lambda
    t - \mu t\right)^+,\] and that the process
\begin{equation*}
  Z^{\dag}_n\,:\,  t \longmapsto \frac{\sqrt{n}}{\sqrt{\lambda+\mu}}\ \left(\overline{L^{\dag}_{n}}(t)-\overline{L^{\dag}}(t)\right)
\end{equation*}
converges in distribution in $\D$ to the standard Brownian motion.

We can control the speed of the latter convergence. For that purpose, we bound
for any fixed $n$ and any horizon $T$, the $\SP$-distance between these
processes, defined by (\ref{eq:defdistSP}). We have the following result,
\begin{theorem}
  \label{thm:mainMM1}
  Suppose that $\lambda < \mu$ and let $T \le \frac{x}{\mu-\lambda}$. Then,
  there exists a constant $c_T$ such that for all $n\in\N$,
  \[\dist_{\SP}\left(Z^{\dag}_n,B\right) \leq {c_T \,\log\,n\over \log\log n\
      \sqrt{n}},\] where $B$ is a standard Brownian motion.
\end{theorem}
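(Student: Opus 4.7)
The plan is to implement the three-step decomposition of Section~\ref{sec:distances},
\[
\dist_{\SP}(Z^{\dag}_n, B)\le \dist_{\SP}(Z^{\dag}_n,\pi_n Z^{\dag}_n) + \dist_{\SP}(\pi_n Z^{\dag}_n,\pi_n B) + \dist_{\SP}(\pi_n B, B),
\]
and to estimate each piece separately. The two outer terms are handled by the general tools of Section~\ref{sec:samplepaths}. Since every $F\in\SP$ is $1$-Lipschitz on $\D$ (hence on $\cont$ with respect to the sup-norm), the third summand is bounded via \eqref{dist B, Bgamma}, yielding $O(n^{-(1/2-\eta)})$. For the first summand, the restriction $T\le x/(\mu-\lambda)$ ensures that the fluid limit $\overline{L^{\dag}}$ is affine on $[0,T]$, hence coincides with its interpolation on the grid, and therefore
\[
\|Z^{\dag}_n-\pi_n Z^{\dag}_n\|_{\infty,T} = \frac{1}{\sqrt{n(\lambda+\mu)}}\,\|L^{\dag}_n-\pi_n L^{\dag}_n\|_{\infty,T}.
\]
Applying Lemma~\ref{thm:BD} to $L^{\dag}_n$, whose jumps are of amplitude $J=1$ and whose rates are bounded by $n(\lambda+\mu)$, produces the contribution $O(\log n/(\sqrt n\log\log n))$ that will dominate the final estimate.

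The central term $\dist_{\SP}(\pi_n Z^{\dag}_n,\pi_n B)$ is where Theorem~\ref{thm:stein_final} comes in, and where the main obstacle lies: the reflection in \eqref{eq_motivation:2} prevents $L^{\dag}_n$ from being written as a sum of compensated Poisson integrals. To bypass it, I would introduce the hitting time $\tau^{\dag}_n := \inf\{t\ge 0 : L^{\dag}_n(t)=0\}$. Because $T\le x/(\mu-\lambda)$ and the fluid limit $\overline{L^{\dag}}$ is strictly positive on $[0,x/(\mu-\lambda))$, a Cram\'er-type large deviation estimate applied to the centered difference $\Pois_{n\lambda}-\Pois_{n\mu}$ yields
\[
\mathbf{P}(\tau^{\dag}_n\le T)\le C_{1}\,e^{-\gamma n}
\]
for some $\gamma>0$, and the boundedness of $F\in\SP$ absorbs this event with an exponentially small error.

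On the complementary event $\{\tau^{\dag}_n>T\}$, the process $Z^{\dag}_n$ coincides on $[0,T]$ with the unreflected martingale
\[
\widehat Z_n(t):= \frac{1}{\sqrt{n(\lambda+\mu)}}\Bigl[\bigl(\Pois_{n\lambda}(t)-n\lambda t\bigr)-\bigl(\Pois_{n\mu}(t)-n\mu t\bigr)\Bigr],
\]
which admits a natural Poisson divergence representation. Taking $E=[0,T]\times\{+,-\}$ with control measure $\nu$ of densities $n\lambda$ and $n\mu$, and
\[
u_j(s,\pm):= \pm\,\frac{1}{\sqrt{T(\lambda+\mu)}}\,\car_{[jT/n,(j+1)T/n)}(s),\qquad j=0,\dots,n-1,
\]
a direct computation shows $\pi_n\widehat Z_n = \dive{\nu}\bigl(\sum_{j}u_j\otimes h_j^n\bigr)$, while the normalization $\xi_j^2=\int u_j^2\dif\nu=1$ identifies the Gaussian target $B_\xi$ in Theorem~\ref{thm:stein_final} with $\pi_n B$ in distribution. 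Since the $u_j$ have disjoint supports they are orthogonal, and the theorem applies. In the triple sum of \eqref{eq_stein:6}, only the diagonal $j=k=l$ survives, contributing
\[
\sum_{j=0}^{n-1}\int_{E}|u_j|^{3}\dif\nu=\frac{n}{\sqrt{T(\lambda+\mu)}},
\]
which combined with the factor $n^{-3/2+\eta}$ gives a central-term bound of $O(n^{-(1/2-\eta)})$. Summing the three pieces and choosing $\eta>0$ small enough produces the claimed rate $O(\log n/(\sqrt n\log\log n))$, the birth-and-death interpolation error being the one which is asymptotically the largest.
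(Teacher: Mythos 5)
Your proposal reproduces the paper's proof nearly step for step: the same three-term triangle decomposition, Lemma~\ref{thm:BD} applied to $L^{\dag}_n$ for the interpolation error, the conditioning on the hitting time $\tau_0^n$ with a large-deviation bound for the exceptional event (the paper invokes the sample-path result of Shwartz--Weiss rather than a one-dimensional Cram\'er estimate, but the role is the same), the marked Poisson representation with exactly these $u_j$'s, and Theorem~\ref{thm:stein_final} for the central term. It is correct and takes essentially the same approach as the paper.
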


\noindent
The proof of Theorem \ref{thm:mainMM1} is deferred to Section
\ref{sec:proofMM1}.

\subsection{The infinite server queue M/M/$\infty$}
\label{subsec:MMinfty}

We now turn to the classical ``infinite server'' M$_\lambda$/M$_\mu$/$\infty$
queue: a potentially unlimited number of servers attend customers that enter the
system following a Poisson process of intensity $\lambda$, requesting service
times that are exponentially distributed of parameter $\mu$ (where $\lambda,\mu
>0$).

Assuming throughout that the system is initially empty, let $L^{\sharp}(t)$
denote the number of customers in the system at time $t$. The process
$L^{\sharp}$ is a.s. an element of $\D$; this is an ergodic Markov process which
obeys the SDE
$$L^{\sharp}(t)=N_{\lambda}(t)-\sum_{i=1}^{+\infty}\int_{0}^{t}\mathbf{1}_{\{L^{\sharp}(s^{-})\geq i\}}{N}^{i}_{\mu}(ds),\quad t\ge 0,$$
where $N_\lambda$ is a Poisson process of intensity $\lambda$ and the
$N_\mu^i$'s are independent Poisson processes of intensity $\mu$.
The classical scaling of the process $L^{\sharp}$ goes as follows; we accelerate
time by a factor $n\in \N^*$ and divide the size of the system by $n$. The
corresponding $n$-th rescaled process is then defined by
$$\overline{L^{\sharp}_n}: t \longmapsto \frac{{N}_{\lambda n}(t)}{n}-\frac{1}{n}\sum_{i=1}^{+\infty}\int_{0}^{t}\mathbf{1}_{\{\overline{L^{\sharp}_n}(s^{-})>\frac{i}{n}\}}{N}^{i}_{\mu}(ds).$$
It is a well known fact (see e.g. Theorem 6.13 in \cite{robert_stochastic_2003})
that the sequence of processes $\left(\overline{L^{\sharp}_n},n\geq 0\right)$
converges in $L^1$ and uniformly over compact sets to the deterministic function
\begin{equation}
  \label{eq:limfluinfty}
  \overline{L^{\sharp}}\, :\, t \longmapsto \rho -\rho e^{-\mu t},
\end{equation}
where $\rho =\lambda/\mu$. Moreover, if we define for all $n$ the process
\begin{equation}
  \label{eq:defGN}
  Z^{\sharp}_{n}\, :\, t \longmapsto \sqrt{n}\left(\overline{L^{\sharp}_n}(t)-\overline{L^{\sharp}}(t)\right),
\end{equation}
then the sequence $\suite{Z^{\sharp}}{n}{0}$ converges in distribution to the
process $Z^{\sharp}$ defined by
\begin{equation}
  Z^{\sharp}\, :\,  t \longmapsto Z^{\sharp}(t)=Z^{\sharp}(0)e^{-\mu t}+\int_{0}^{^t}e^{-\mu(t-s)}\sqrt{h(s)}\,\dif B(s),
  \label{ProcessusX}
\end{equation}
where $h(t)=\lambda\left(2 - e^{-\mu t}\right)$ for all $t\geq 0$; see e.g.
\cite{borovkov_limit_1967} or Theorem 6.14 in \cite{robert_stochastic_2003}. We
have the following result,
\begin{theorem}
  \label{thm:mainMMinfty}
  For any $T>0$, there exists a constant $c_T>0$ such that for all $n \ge 1$,
  \[\dist_{\SP}(Z^{\sharp}_n,Z^{\sharp})\leq\frac{c_T\log n}{\log \log n\,
      \sqrt{n}}\cdotp\]
\end{theorem}
\noindent We defer the proof of Theorem \ref{thm:mainMMinfty} to Section
\ref{sec:proofMMinfty}.

\subsection{Consequences}
\label{sec:consequences}
Let us quote a few
functionals which are often encountered in queueing analysis, and which are
regular enough to be elements of $\Sigma$ (see Definition \ref{def:Sigma}
below). This is the case, first, for the function $F_f$, that is defined for any
mild enough function $f$ and $T>0$, by
\[F_f:
  \begin{cases}
    \D &\longrightarrow \R\\
    x=\Bigl(x_t,\, t\in [0,T]\Bigr) &\longmapsto \dfrac{1}{T} \displaystyle\int_0^T f(x_s) \dif s,
  \end{cases}\] observing that $F_f\left(X\right)$ goes to $\mathbb
E_\pi\left[f\right]$ for large $T$ whenever the Markov process $X$ is ergodic of invariant
probability $\pi$. The proof is deferred to
Remark~\ref{rem:1} below. Similarly, for $M\ge 0$ and $p\ge 2$,
\[F_{M,p}:
  \begin{cases}
    \D &\longrightarrow \R\\
    x &\longmapsto \left(\displaystyle\int_0^T |x_s\wedge M|^{p } \dif s\right)^{1/p}.
  \end{cases}\]
also belongs
to the set of admissible test functions. Observe that for $M$ and $p$ large enough,
$F_{M,p}(x)$ can be considered as an ersatz to  $\sup_{s\le T}|x_{s}|$.

For
any of these functionals $F$, if $\dist (\P_{X_{}n},\P_{X})$ tends to $0$ as
$n^{-\alpha}$, then the distribution of the random variables $(F(X_{n}),\, n\ge
1)$ converges in the sense of a damped Kantorovith-Rubinstein distance at a rate $n^{-\alpha}$:
\begin{equation*}
  \sup_{\varphi\in \mathcal C_{b}^{3}}\left| \esp{\varphi\Bigl( F(X_{n}) \Bigr)} -\esp{\varphi\Bigl( F(X) \Bigr)}\right|\le c \, n^{-\alpha},
\end{equation*}
where $\mathcal C_{b}^{3}$ is the set of three times differentiable functions
from $\R$ to $\R$ with bounded derivatives of any order.
Note that this kind of result is inaccessible via the standard Stein's method in
dimension~1, since we usually cannot achieve the first step of the SM, which
consists in devising a functional characterization of the distribution of
$F(X)$.

\section{Interpolation of Markov processes}
\label{sec:interpol}
To prove Theorems \ref{thm:mainMM1} and \ref{thm:mainMMinfty}, we will be led to
bound the distance between the affine interpolation of the Markov process under
consideration ($Z^\dag$ in the first case, $Z^\sharp$ in the second), and that
of a (time-changed) Brownian motion, on a finite horizon $T>0$.

For fixed $T>0$ and $n\in\N^*$, let us denote throughout this paper, by $t^n_i$,
$i=0,...,n$, the points of the discretization of $[0,T]$ of constant mesh $T/n$,
namely $t^n_i = iT/n$ for all $i=1,...,n$. For a function $f\in\D$, denote by
$\Pi_nf$ its affine interpolation on the latter grid, that is, for all $t \in
[0,T]$, for $k=1,...,n$ such that $t\in\left[t_{k-1}^n, t^n_k\right]$,
\begin{equation}
\label{eq:defPin}
\Pi_nf(t)= {n\over T}\left(f\left(t^n_k\right) -
    f\left(t^n_{k-1}\right)\right)\left(t-t^n_{k-1}\right)+f\left(t^n_{k-1}\right).
    \end{equation}
An immediate computation then shows that for all $t \le T$ and for $k$ as above,
we have that
\begin{align}
  \Pi_nf(t) &= {n \over T}\Biggl(\left(f\left(t^n_k\right) - f\left(t^n_{k-1}\right)\right) \left(t-t^n_{k-1}\right)+\sum_{i=1}^{k-1}\left(f\left(t^n_{i}\right)-f\left(t^n_{i-1}\right) \right) {T\over n} \Biggl)+ f(0)\nonumber\\
            &={n \over T}\left(\sum_{i=1}^n\left(f\left(t^n_i\right) - f\left(t^n_{i-1}\right)\right) \int_0^t \mathbf 1_{\left[t^n_{i-1},t^n_i\right)}(s)\,ds\right)+f(0)\nonumber\\
            &=\sqrt{{n \over T}}\left(\sum_{i=1}^n\left(f\left(t^n_i\right) - f\left(t^n_{i-1}\right)\right) h^n_i(t)\right)+f(0),\label{eq:interpol}
\end{align}
where
\begin{equation}
  \label{eq:defhni}
  h^n_i : t \longmapsto \sqrt{{n \over T}}\int_0^t \mathbf 1_{\left[t^n_{i-1},t^n_i\right)}(s)\,ds,\quad i=1,...,n.
\end{equation}
In what follows, $B$ denotes a standard one dimensional Brownian motion and observe that $\Pi_nB$ and $B_n$ defined by (\ref{eq:defBn}) below, are equal in law.
Let us define the space
\begin{equation}
\label{eq:defW}
W := W_T = \left\{\mbox{continuous mappings from $[0,T]$ to $\R$}\right\},
\end{equation}
which, furnished with the sup norm $\parallel . \parallel_{W}$ defined for all $f\in W$ by $\parallel f \parallel = \sup_{x\in[0,T]} |f(x)|$, is a Banach space.

The Proposition 13.20 in \cite{friz_multidimensional_2010}
states that for all $T>0$, for some $c>0$, we have that
\begin{equation}
  \esp{\parallel \Pi_{n}B - B\parallel_{W}}\le c\, n^{-1/2},\,\text{ for all } n\in\N^*.
  \label{dist B, Bgamma}
\end{equation}

We now estimate the distance between the sample-paths of Birth-and-Death
processes and their interpolation. Specifically,

\begin{lemma}
  \label{thm:BD}
  Let $T>0$, $n\in\N^*$, and let $X$ be a $\N$-valued Markov jump process on
  $[0,T]$ of infinitesimal generator $\mathscr A$. Suppose that there exist two
  constants $J \in \N$ and $\alpha>0$ such that
  \begin{itemize}
  \item the magnitude of the jumps of $X$ is bounded by $J>0$, i.e. for all $i,j
    \in \N$, $\mathscr A(i,j)=0$ whenever $|j-i| > J$;
  \item the intensities of the jumps of $X$ are bounded by $n\alpha$, i.e. for
    all $i,j \in \N$, $i \ne j$, $\mathscr A(i,j) \le n\alpha$.
  \end{itemize}
  Then,
  \[\esp{\parallel X-\Pi_nX \parallel_{W}} \le 2J\ {\log\,n \over
      \log\log n}\cdotp\]
\end{lemma}

\begin{proof}
  Fix $n\in \N$ and within this proof, set for $t^n_i={iT \over n}$ for
  $i=0,...,n$.
  For any $t\in [0,T]$, for $i\le n$ such that $t \in
  \left[t^n_{i-1},t^n_{i}\right]$ we have that
  \begin{align*}
    \left|X(t)-\Pi_nX(t)\right|
    &= \left|X(t) - X\left(t^n_{i-1}\right)-{n \over T}\left(t-t^n_{i-1}\right)\left(X\left(t^n_{i}\right)- X\left(t^n_{i-1}\right)\right)\right|\\
    &\leq 2 \sup_{t\in \left[t^n_{i-1},t^n_{i}\right]}\left|
      X(t)- X\left(t^n_{i-1}\right)\right|,
  \end{align*}
  so that
  \begin{equation}
    \esp{\parallel X -\Pi_nX \parallel_{W}}
    \le 2\esp{\max_{i\in [0,n-1]}\sup_{t\in \left[t^n_{i-1};t^n_{i}\right]}\left| X(t)-X\left(t^n_{i-1}\right)\right|}.\label{eq:BD1}
  \end{equation}
  But for any $i$ and any $t \in \left[t^n_{i-1},t^n_{i}\right]$, we have that
  \[\left|X(t)-X\left(t^n_{i-1}\right)\right| \le J\left(A_n^i +
      D_n^i\right),\] where $A_n^i$ and $D_n^i$ denote respectively the number
  of up and down jumps of the process $X$ within the interval
  $\left[t^n_{i-1},t^n_{i}\right]$. In turn, by assumption $A_n^i+D_n^i$ is
  stochastically dominated by a Poisson r.v., say $P^i$, of parameter $\alpha n
  {T \over n} = \alpha T$. All in all, we obtain with (\ref{eq:BD1}) that
  \[\esp{\parallel X-\Pi_nX\parallel_{W}} \le 2J \,\esp{\max_{i\in
        [1,n]}P^i},\] and we conclude using Proposition
  \ref{prop:momentPoisson}. \hfill\qed \end{proof}

\section{A functional Stein method}
\label{sec:thm1}


\subsection{Stein's method in a nutshell}
\label{sec:nutshell}
Say that we want to compare a distribution $\nu$ on $\R^{n}$, $q\ge 1$, to the standard
Gaussian distribution on $\R^{n}$, denoted by $\mu_{n}$. Consider the processes
\begin{equation}\label{eq_fluid_queueing_BDM2:5}
  t \mapsto X(x,t)=e^{-t}x+\sqrt{2}\int_{0}^{t}e^{-(t-s)}\dif B^n(s),\quad x\in \R^n,
\end{equation}
where $B^n$ is an ordinary Brownian motion in $\R^n$. For all $x$, it is a Gaussian process
whose distribution at time $t$ is a Gaussian law of mean $e^{-t}x$ and
covariance matrix $(1-e^{-2t})\Id_{n}$. For $t\ge 0,\,x\in\R^n$, let
\begin{equation*}
  Q^{n}_{t}f(x)=\esp{f(X(x,t))}=\int_{\R^{n}}f(e^{-t}x+\beta_{t}y)\dif \mu_{n}(y),
\end{equation*}
where $\beta_{t}=\sqrt{1-e^{-2t}}$. The dominated convergence theorem entails
that
\begin{equation*}
  Q^{n}_{t}f(x)\xrightarrow{t\to \infty} \int_{\R^{n}}f\dif \mu_{n},\quad x\in\R^n.
\end{equation*}
Moreover, the Dynkin Lemma and the Itô formula entail (see \cite{ethier86}) that
\begin{equation}\label{eq_questa_BDM:1}
  Q^{n}_{t}f(x)-f(x)=\int_{0}^{t} A^{n}Q^{n}_{s}f(x)\dif s,\quad x\in\R^n,\,t\ge 0,
\end{equation}
where for $f $ regular enough
\begin{equation*}
  A^{n}f(x)=: \left.\frac{d}{dt}(Q_{t}^{n}f)(x)\right|_{t=0}=  \<x,\, d_{n}
  f(x)\>_{\R^{n}}-\Delta_{n} f(x).
\end{equation*}
The notation $d_{n}f$ represents the usual gradient of $f\, :\,\R^{n}\to \R$ and
$\Delta_{n} f$ is its Laplacian. Integrate both sides of \eqref{eq_questa_BDM:1}
with respect to $\nu$ to obtain the so-called Stein-Dirichlet representation:
for any $f$ in a well chosen functional space $\mathcal F$ (i.e. we must at
least  require
that the previous limits do exist and that $A^{n}Q^{n}f$ is well defined and integrable
for $f\in \mathcal F$),
\begin{equation}\label{eq_fluid_queueing_BDM2:6}
  \dist_{\mathcal F} \bigl(\nu,\, \mu_{n}\bigr)=\sup_{f\in \mathcal F} \int_{\R^{n}} \int_{0}^{\infty} A^{n}Q^{n}_{s}f(x)\dif s \dif \nu(x).
\end{equation}
This formula is the first step of the modern approach to the Stein's method, see
\cite{decreusefond_stein-dirichlet-malliavin_2015}.

\subsection{Generalization to infinite dimension}
As we mentioned above, the proofs of Theorems \ref{thm:mainMM1} and
\ref{thm:mainMMinfty} critically rely on bounding the distance between the
affine interpolations of the Markov processes under consideration and their
diffusion approximations. For this, we need to go to a functional setup, that
is, to bound a similar expression to (\ref{eq_fluid_queueing_BDM2:6}) when the
target measure is that of a Gaussian process, instead of a $d$-dimensional
Gaussian random variable. This is done in the main result of this section,
Theorem \ref{thm:stein_final}.

Fix $T>0$ and an integer $n\ge 1$. Recall (\ref{eq:defhni}), and define
the following subspace of $W$,
\begin{equation*}
  W_{n}=\vect\{h_{j}^{n},j=1,\cdots, n\},
\end{equation*}
equipped with the sup-norm $\parallel . \parallel_W$. Now define the process
\begin{equation}
\label{eq:defBn}
  B_{n}=\sum_{j=1}^{n} Y_{j}\, h_{j}^{n},
\end{equation}
 where $(Y_{j},\, j=1,\cdots,n)$ is a Gaussian vector of distribution $\mu_{n}$.
Clearly, $B_{n}$ belongs to $W_{n}$ with probability~$1$, thereby defining a
Gaussian distribution, denoted by $\pi_{n}$, on $W_{n}$. We also need a space to
define the gradients. For this, we now consider the space
\begin{equation*}
  H_{n}=\vect\{h_{j}^{n},j=1,\cdots, n\}
\end{equation*}
equipped with the scalar product
\begin{equation*}
  \<h,\, g\>_{H_{n}}=\int_{0}^{T}h'(s)g'(s)\dif s,\,h,g,\in H_n.
\end{equation*}
\begin{remark}
  Distinguishing between the spaces $H_{n}$ and $W_{n}$ may seem spurious, as these are
  algebraically the same set, and only differ by their norms. Actually, $W_{n}$
  (respectively, $H_{n}$) is
  the image by the map $\Pi_{n}$ defined by (\ref{eq:defPin}), of the set $W$ defined by (\ref{eq:defW})
  (resp., of the Banach set $H$ - dense in $W$ - that is defined by \eqref{eq_questa_BDM:7} below). 
  An intuitive explanation of our need to introduce the space $H$ is as follows:
  As mentioned above, the control of the properties of the solution of the Stein
  equation requires dealing with the derivative of this function.
  In functional
  spaces, the usual notion of derivative is replaced by that of Fréchet
  differential: A function $F$ from $W$ into $\R$ is Fréchet differentiable whenever for any $w,w'\in W$, the
  function
  \begin{equation*}
    \varepsilon \longmapsto F(w+\varepsilon \, w')
  \end{equation*}
  is differentiable with respect to $\varepsilon$ in a neighbor of $0$. For
  technical reasons, which are detailed in \cite{coutin_rough}, assuming that $F$
  is Fréchet differentiable in a probabilistic context is too stringent a
  condition. It turns out that the notion of weak differentiability, i.e.
  the
  function
  \begin{equation*}
    \varepsilon \longmapsto F(w+\varepsilon \, h)
  \end{equation*}
  is differentiable with respect to $\varepsilon$ in a neighbor of $0$ for any
  $w\in W$ and $h\in H$ is sufficient for what we aim to do, and do not put too
  strong a constraint on~$F$. Hence the necessity of considering $W$ (the space
  into which the sample-paths of our processes take place) and $H$ (the set of the
  admissible directions of differentiation), and thus to distinguish between the spaces $W_{n}$ and $H_{n}$ at the
  level of the interpolated processes.
\end{remark}

The space $H_{n}^{\otimes (2)}$ is then the vector space
\begin{equation*}
  H_{n}^{\otimes (2)} = \vect\left\{h_{j}^{n}\otimes h_{k}^{n}=\Bigl( (s_{1},s_{2})\longmapsto h_{j}^{n}(s_{1})h_{k}^{n}(s_{2}) \Bigr),\, j,k=1,\cdots,n\right\},
\end{equation*}
equipped with the scalar product: For any $h,g\in H_{n}^{\otimes (2)}$,
\begin{equation*}
  \<h,g\>_{H_{n}^{\otimes (2)}}=\int_{0}^{T}\int_{0}^{T}\frac{\partial^{2} h}{\partial s_{1}\partial s_{2}}(s_{1},s_{2})\ \frac{\partial^{2} g}{\partial s_{1}\partial s_{2}}(s_{1},s_{2})\dif s_{1}\dif s_{2} .
\end{equation*}
For a regular enough function $f\, :\, W_{n}\to \R$, we denote by $D_{n}f$ its
differential, i.e. for any $w\in W_{n}$, for any $h\in H_{n}$,
\begin{equation}
  \label{eq_questa_BDM:3bis}
  \<D_{n}f(w),\, h\>_{H_{n}}=\left.\frac{d}{d\varepsilon} f(w+\varepsilon h)\right|_{\varepsilon=0}.
\end{equation}
We even need to iterate this definition and consider the second order
differential, for any $w\in W_{n}$, for any $h_{1},h_{2}\in H_{n}$,
\begin{equation}\label{eq_questa_BDM:3}
  \<D^{(2)}_{n}f(w),\, (h_{1},h_{2})\>_{H_{n}^{\otimes (2)}}=\left.\frac{\partial^{2}}{\partial\varepsilon_{1}\partial\varepsilon_{2}} f(w+\varepsilon_{1} h_{1}+\varepsilon_{2}h_{2})\right|_{\varepsilon_{1}=\varepsilon_{2}=0}.
\end{equation}
The map
\[T_{n}\, :\, \left\{\begin{array}{ll}
                       \R ^{n}&\longrightarrow W_{n}\\
                       (y_{1},\cdots, y_{n})&\longmapsto
                                              \displaystyle\sum_{j=1}^{n}
                                              y_{j}\, h_{j}^{n},
                     \end{array}\right.\]
                 is a morphism of probability spaces, i.e. it is linear,
                 continuous and preserves the probability measures: the image
                 measure of $\mu_{n}$ by $T_{n}$ is actually $\pi_{n}$. Then, we
                 can generalize the construction we just followed on $\R^{n}$ to
                 the finite dimensional space $W_{n}$. The family of maps
                 $(P_{t}^{n},\, t\ge 0)$ is defined as follows: $P_{0}^{n}=\Id$
                 and for all $t>0$,
                 \[P_t^n:\left\{\begin{array}{ll}
                                  L^{1}(\pi_{n}) & \longrightarrow \,L^{1}(\pi_{n})\\
                                  f & \longmapsto \left(w\longmapsto
                                      P_{t}^{n}f(w)=\displaystyle\int_{W_{n}}f(e^{-t}w+\beta_{t}\zeta)\dif
                                      \pi_{n}(\zeta)\right)
                                \end{array}\right.\]
                            Since $T_{n}$ is linear and since $\pi_{n}$ is the
                            image of $\mu_{n}$ by $T_{n}$, we easily see that
                            for any $t\ge 0$
                            \begin{equation*}
                              Q^{n}_{t}(f\circ T_{n})=\bigl( P_{t}^{n}f \bigr)\circ T_{n},
                            \end{equation*}
                            which can be written
                            \begin{equation}
                              \label{eq_questa_BDM:5}
                              P_{t}^{n}f= Q^{n}_{t}(f\circ T_{n})\circ T_{n}^{-1}.
                            \end{equation}
                            Thus, $(P_{t}^{n},\, t\ge 0)$ is a semi-group such
                            that
                            \begin{equation*}
                              P_{t}^{n}f(w)\xrightarrow{t\to \infty} \int_{W_{n}}f\dif \pi_{n},\quad w\in W_{n}.
                            \end{equation*}
                            From \eqref{eq_questa_BDM:5}, we also infer that for
                            $f\, :\, W_{n}\to \R$ twice differentiable,
                            \begin{equation*}
                              L^{n}f=: \left.\frac{d}{dt}(P_{t}^{n}f)\right|_{t=0}=A^{n}(f\circ T_{n})\circ T_{n}^{-1}.
                            \end{equation*}
                            Hence, we have that
                            \begin{equation}\label{eq_questa_BDM:2}
                              P_{t}^{n}f(w)-f^{n}(w)=\int_{0}^{t} L^{n}P^{n}_{s}f(w)\dif s,\quad w\in W_{n},\,t\ge 0,
                            \end{equation}
                            where for $f$ regular enough
                            \begin{equation*}
                              L^{n}f(w)=  \< D_{n} f(w),\, w\>_{H_{n}}-\sum_{j=1}^{n} \< D^{(2)}f(w),\,h_{j}^{n}\otimes h_{j}^{n} \>_{H_{n}^{\otimes (2)}}.
                            \end{equation*}
                            Thus, for any measure $\nu_{n}$ on $W_{n}$,
                            \begin{equation}\label{eq_fluid_queueing_BDM2:61}
                              \dist_{\mathcal F_{n}} \bigl(\nu_{n},\, \pi_{n}\bigr)=\sup_{f\in \mathcal F_{n}} \int_{W_{n}} \int_{0}^{\infty} L^{n}P^{n}_{s}f(w)\dif s \dif \nu_{n}(w),
                            \end{equation}
                            where $\mathcal F_{n}$ is a space of regular enough
                            test functions from $W_{n}$ into $\R$. We can now
                            precise which kind of test functions we are going to
                            consider. In view
                            of~\eqref{eq_fluid_queueing_BDM2:61}, it must
                            contains twice differentiable functions but for
                            technical reasons, we need more than that.
                            \begin{definition}
                              A function $f\, :\, W_{n}\to \R$ is said to belong
                              to the class $\Sigma_{n}$ whenever it is
                              $1$-Lipschitz continuous, twice differentiable in
                              the sense of \eqref{eq_questa_BDM:3}, and we have
                              \begin{equation}
                                \label{eq_preliminaries:5}
                                \sup_{w\in W_{n}}   \left|     \<D_{n}^{(2)} f_{n}(w)-D_{n}^{(2)} f_{n}(w+g),\, h\otimes k \>_{H_{n}^{\otimes (2)}}\right|\le \|g\|_{\fs}\, \|h\|_{L^{2}}\|k\|_{L^{2}},
                              \end{equation}
                              for any $g\in \fs_{n},\ h,k \in
                              H_{n}$. 
                              \label{DefSigman}
                            \end{definition}
                            Actually, in the definition of the distance between
                            distributions of processes, the test functions are
                            defined on the whole space $W$. Hence, we must find
                            a class of functions whose restriction to $W_{n}$
                            belong to $\Sigma_{n}$ for any $n\ge 1$. This
                            involves the notion of $H$-differential on~$W$. Let
                            \begin{equation}\label{eq_questa_BDM:7}
                              H=\left\{ h,\, \exists ! h'\in L^{2}([0,T]) \text{ such that } h(t)=\int_{0}^{t } h'(s)\dif s \right\}.
                            \end{equation}
                            It is an Hilbert space when equipped with the scalar
                            product
                            \begin{equation*}
                              \<h,g\>_{H}=\int_{0}^{T}h'(s) g'(s)\dif s.
                            \end{equation*}
                            A function $f\, :\, W\to \R$ is said to be twice
                            $H$-differentiable whenever for any $w\in W$, for
                            any $h\in H$, the
                            function
                            \[\left\{ \begin{array}{ll}
                                        \R &\longrightarrow \R\\
                                        \varepsilon &\longmapsto f(w+\varepsilon h)
                                      \end{array}\right.\]
                                  is twice differentiable in a neighbor of~$0$. We denote by
                                  $D f$ and $D^{(2)}f$ its first and second
                                  order gradient, defined by
                                  \begin{align*}
                                    \< D f(x),\, h\>_{H}&=\left.\frac{d}{d\varepsilon} f(x+\varepsilon h)\right|_{\varepsilon=0},\\
                                    \< D^{(2)} f(x),\, h_{1}\otimes h_{2}\>_{H^{\otimes (2)}} &=\left.\frac{\partial^{2}}{\partial\varepsilon_{1}\partial \varepsilon_{2}} f(w+\varepsilon_{1} h_{1}+\varepsilon_{2}h_{2})\right|_{\varepsilon_{1}=\varepsilon_{2}=0}.\\
                                  \end{align*}
                                  \begin{definition}
                                    \label{def:Sigma}
                                    The class $\Sigma$ is the set of
                                    $1$-Lipschitz continuous, twice
                                    $H$-differentiable functions such that
                                    \begin{equation*}
                                      \sup_{w\in W}   \left|     \<D^{(2)} f(w)-D^{(2)} f(w+g),\, h\otimes k \>_{H^{\otimes (2)}} \right|\le \|g\|_{\fs}\, \|h\|_{L^{2}}\|k\|_{L^{2}},
                                    \end{equation*}
                                    for any $g\in \fs,\ h,k \in H$.
                                  \end{definition}
                                  For $f\,:\,W\to \R$, let $f_{n}=f_{|W_{n}}$.
                                  If $f $ is once $H$-differentiable, then, we
                                  have that for any $w_{n}\in W_{n}$, any $j\in
                                  \{0,\cdots,n-1\}$,
                                  \begin{multline}\label{eq_questa_BDM:4}
                                    \< Df(\mathfrak e(w_{n})),\,
                                    h_{n}^{j}\>_{H}=\left.
                                      \frac{d}{dt}f(\mathfrak
                                      e(w_{n}+\varepsilon h_{n}^{j
                                      }))\right|_{\varepsilon=0} =\left.
                                      \frac{d}{dt}f_{n}(w_{n}+\varepsilon
                                      h_{n}^{j })\right|_{\varepsilon=0}\\= \<
                                    D_{n}f_{n}(w_{n}),\, h_{n}^{j}\>_{H_{n}}.
                                  \end{multline}
                                  Thus, it is straightforward that if $f$
                                  belongs to $\Sigma$ then $f_{n}$ belongs to
                                  $\Sigma_{n}$ for any $n\ge 1$.
                                  \begin{remark}\label{rem:1} We can now show how to prove
                                    that the functionals mentioned in the
                                    introduction do belong to $\Sigma$. Consider
                                    the first one :
                                    \begin{equation*}
                                      F_{f}(x)=\frac{1}{T}\int_{0}^{T}f(x_{s})\dif s.
                                    \end{equation*}
                                    Then, for any $x,y\in W$,
                                    \begin{equation*}
                                      |F_{f}(x)-F_{f}(y)|\le \|x-y\|_{W}
                                    \end{equation*}
                                    provided that $f$ is Lipschitz continuous.
                                    Moreover, a classical computation shows that
                                    \begin{multline*}
                                      \<D^{(2)}F_{f}(x+g)-D^{(2)}f(x),\, h\otimes k\>_{H^{\otimes (2)}}\\
                                      =\dfrac{1}{T}\int_{0}^{T
                                      }\bigl(f''(x_{s}+g(s))-f''(x_{s})\bigr)h_{s}k_{s}\dif
                                      s.
                                    \end{multline*}
                                    Hence $F_{f}$ belongs to $\Sigma$ as long as
                                    $f''$ does exist and is Lipschitz
                                    continuous. The other cases are handled similarly.
                                  \end{remark}
                                  \subsection{Functionals of Poisson marked
                                    point processes}
                                  \label{sec:functionalStein}
                                  Let $N_\nu$ be a marked point process on
                                  $E=[0,T]\times \R^{+}$ whose jump times are
                                  denoted by $(T_{n},n\ge 1)$, and jumps
                                  magnitude by $(Z_{n},n\ge 1)$. It is said to
                                  be a Poisson marked point process of (diffuse)
                                  control measure~$\nu$ whenever for any
                                  function
                                  \begin{math}
                                    u=\left( u(s,z),\, s\in [0,T],\, z\in \R^{+}
                                    \right)
                                  \end{math}
                                  in $L^{2}(\nu)$,
                                  the process
                                  \begin{equation*}
                                    t \longmapsto  ( \dive{\nu} u)(t)= \sum_{T_{n}\le t } u(T_{n},Z_{n})-\int_{0}^{t}\int_{\R^{+}} u(s,z)\dif \nu(s,z)
                                  \end{equation*}
                                  is a square integrable martingale. We set
                                  \begin{equation}\label{eq:defdive}
                                    \dive{\nu} u= ( \dive{\nu} u)(T).
                                  \end{equation}
                                  Consider the so-called discrete gradient
                                  \cite{DecreusefondStochasticmodelinganalysis2012,privault_stochastic_2009},
                                  \begin{equation*}
                                    \nabla_{s,z}f(N_\nu)=f(N_\nu+\varepsilon_{s,z})-f(N_\nu),\,s\in[0,T],\,z\in\R^+,
                                  \end{equation*}
                                  where $N_\nu+\varepsilon_{s,z}$ represents the
                                  sample-path $N_\nu$ to which we add an atom at
                                  time $s$ of size $z$. Since $\nu$ is diffuse,
                                  there is a zero probability that an atom at
                                  time~$s$ already exists in $N_\nu$. Similarly,
                                  we denote by $N_\nu-\varepsilon_{s,z}$ the
                                  sample-path $N_{\nu}$ to which we remove the
                                  atom $\varepsilon_{s,z}$ provided it is
                                  present in $N_{\nu}$, otherwise $N_{\nu}$
                                  remains unchanged.
                                  \begin{definition}
                                    We define the domain of $\nabla$ as
                                    \begin{equation*}
                                      \dom \nabla=\left\{ f,\ \esp{\int_{[0,T]\times \R^{+}}|\nabla_{s,z}f(N_\nu)|^{2}\dif \nu(s,z)}<\infty, \right\}.
                                    \end{equation*}
                                  \end{definition}
                                  We then have
                                  the integration by parts formula
                                  \cite{DecreusefondStochasticmodelinganalysis2012}:
                                  \begin{lemma}
                                    \label{thm:ipp_Poisson}
                                    For $u\in L^{2}(\nu)$, for $f\in \dom \nabla$,                                    we have that
                                    \begin{equation}
                                      \label{eq_preliminaries:1}  \esp{ f(N_\nu)\  \dive{\nu} u}=\esp{ \int_{[0,T]\times \R^{+}}  \nabla_{s,z}f(N_\nu) \, u(s,z) \dif \nu(s,z) }.
                                    \end{equation}
                                  \end{lemma}
                                  For the sake of completeness, we reproduce the
                                  proof of this identity, which is a mere
                                  rewriting of the Campbell-Mecke formula for
                                  Poisson processes.
                                  \begin{proof}
                                      By the very definition of $\nabla$,
  \begin{multline}\label{eq:1}
    \esp{\int_{E}\nabla_{s,z}f(N_\nu) \, u(s,z)\dif \nu(s,\,z)}\\=\esp{\int_{E} f(N_\nu+
      \varepsilon_{s,z})u(s,z) \dif \nu(s,\,z)}
    -\esp{\int_{E} f(N_\nu)u(s,z) \dif \nu(s,\,z)}.
  \end{multline}
  The Campbell-Mecke formula for Poisson processes says that
  \begin{multline}\label{eq_poisson2:5}
    \esp{\int_{E} f(N_\nu+
      \varepsilon_{s,z})u(s,z) \dif \nu(s,\,z)}=\esp{ f(N_\nu)\sum_{T_{n}\le T } u(T_{n},Z_{n})}.
  \end{multline}
Plug \eqref{eq_poisson2:5} into the right-hand-side of \eqref{eq:1} to obtain~\eqref{eq_preliminaries:1}.
                                  \qed\end{proof}
                                  \begin{remark} If we have an unmarked Poisson
                                    process of intensity $\dif\nu(s)=\nu \dif s$,
                                    then (\ref{eq_preliminaries:1}) still
                                    holds by suppressing all occurrences of
                                    the $z$ variable.
                                  \end{remark}

                                  We are now equipped to prove the cornerstone
                                  theorem of our paper. For $u_j^{n},\,
                                  j=1,\cdots,n$ a family of elements of $
                                  L^2([0,T]\times \R^{+},\nu)$, set
                                  \begin{equation*}
                                    u^{n}(s,z,t)=\sum_{j=1}^{n}u_j^{n}(s,z)\, h_{j}^{n}(t)\text{ and } \dive{\nu} u^{n}(t)=\sum_{j=1}^{n} \dive{\nu}( u_j^{n})\ h_{j}^{n}(t).
                                  \end{equation*}
                                  For any $j\in \{1,\cdots,n\}$, let
                                  \begin{equation}\label{eq_stein:4}
                                    \xi_{j,n}^{2}=\int_{0}^{T}\int_{\R^{+}} u_{j}^{n}(s,z)^{2}\dif \nu(s,z)
                                  \end{equation}
                                  and consider
                                  \begin{equation*}
                                    \Gamma_{\xi_{n}}=\text{diag}(\xi_{j,n}^{2},\, j=1,\cdots,n).
                                  \end{equation*}
                                  Furthermore, take $Y=(Y_{j},\, j\ge 1)$ a
                                  family of independent standard Gaussian random
                                  variables 
                                  and let
                                  \begin{equation}\label{eq_fluid_queueing_BDM2:12}
                                    B_{\xi_{n}}(t)=\sum_{j=1}^{n}\xi_{j,n}\ Y_{j}\, h_{j}^{n}(t).
                                  \end{equation}
                                  \begin{theorem}
                                    \label{thm:stein_final}
                                    Assume that $(u_{j}^{n},\, j=1,\cdots,n)$ is
                                    an orthogonal family of elements of
                                    $L^{2}(\nu)$. Then, for any $f_{n}\in
                                    \Sigma_n$,
                                    \begin{multline*}
                                      \left|     \esp{f_{n}(B_{\xi_{n}})}-\esp{f_{n}(\dive{\nu} u^{n})} \right| \\
                                      \le \frac{ n^{-3/2}T^2}4
                                      \sum_{j,k,l=1}^{n}
                                      \xi_{j,n}\xi_{k,n}\xi_{l,n}
                                      \int_{[0,T]\times\R^{+}}|u_j^{n}(s,z)u_{k}^{n}(s,z)|\,
                                      |u_{l}^{n}(s,z)|\dif \nu(s,z),
                                    \end{multline*}
                                    where $\dive{\nu}$ is defined by
                                    (\ref{eq:defdive}).
                                  \end{theorem}
                                  \begin{proof}
                                    For the sake of notational simpicity, we
                                    remove the suffix $n$ as it is fixed along
                                    the proof. Note that in view of
                                    \eqref{eq_questa_BDM:4}, there is no
                                    ambiguity to denote $D_{n}$ as $D$ since
                                    they coincide on $W_{n}$. To shorten the
                                    equations, $E$ stands for $[0,T]\times
                                    \R^{+}$ and $x=(s,z)$ is a generic point of
                                    $E$.

                                    Dividing each $u_{j}^{n}$ by $\xi_{j,n}$,
                                    $j\ge 1$, it is sufficient to prove the
                                    result for $\xi_{j,n}=1$, $j\ge 1$. Now recall (\ref{eq:defBn}).
                                    First, in view of {\eqref{eq_questa_BDM:2}},
                                    \begin{multline}\label{eq_stein:5}
                                      \esp{f(B_{n})}-\esp{f(\dive{\nu} u)}
                                      =-\sum_{j=1}^{n}\int_{0}^{\infty}\esp{\dive{\nu} u_{j}\<D P_{t} f(\dive{\nu} u)\,,\,h_{j}\>_{H}}\, \dif t\\
                                      +\sum_{j=1}^{n}\int_{0}^{\infty}\esp{\<
                                        D^{(2)}P_{t}f(\dive{\nu}
                                        u)\,,\,h_{j}\otimes h_{j}\>_{H^{\otimes
                                            (2)}}}\dif t.
                                    \end{multline}
                                    According to the integration by parts
                                    formula~{\eqref{eq_preliminaries:1}} and to
                                    the fundamental theorem of calculus, we get
                                    that
                                    \begin{multline*}
                                      \sum_{j=1}^{n}\esp{\dive{\nu} u_{j}
                                        \<h_{j},\ D P_{t}f(\dive{\nu}
                                        u)\>_{\cm }}\\
                                      \shoveleft{ =\sum_{j=1}^{n} \mathbf
                                        E\Biggl[ \int_{E} u_{j}(x)\, \<D
                                        P_{t}f\bigl(\dive{\nu} u+u(x) \bigr)
                                        -D P_{t} f\bigl(\dive{\nu} u\bigr)\,,\,h_{j}\>_{\cm}\dif \nu (x)\Biggr]}\\
                                      \shoveleft{=\sum_{j,k=1}^{n}\mathbf
                                        E\Biggl[ \int_{E} \int_{0}^{1} u_{j}(x)
                                        u_{k}(x)}
                                      \<D^{(2)} P_{t}f\bigl(\dive{\nu}
                                      u^{}+r\,u(x)\,,\,h_{j}\otimes h_{k}
                                      \bigr)\>_{H^{\otimes (2)}}\!\!\!\dif r\dif
                                      \nu(x)\Biggr].
                                    \end{multline*}
                                    But as the $u_{k}$'s are orthonormal,
                                    \begin{multline*}
                                      \esp{\sum_{j=1}^{n} \<D^{(2)}
                                        \PP_{t} f\bigl(\dive{\nu} u \bigr)\,,\,h_{j}\otimes h_{j}\>_{H^{\otimes (2)}}}\\
                                      =
                                      \esp{\sum_{j,k=1}^{n}\int_{E}\int_{0}^{1}u_{j}(x)
                                        u_{k}(x)\<D^{(2)}
                                        \PP_{t}f\bigl(\dive{\nu} u
                                        \bigr)\,,\,h_{j}\otimes h_{k} \>_{\cm }
                                        \dif r\dif \nu(x)}.
                                    \end{multline*}
                                    Since $f$ belongs to $\Sigma_{n}$, the
                                    right-hand-side of \eqref{eq_stein:5}
                                    becomes
                                    \begin{multline*}
                                      \sum_{j,k=1}^{n}\int_{0}^{\infty }
                                      \int_{E} \int_{0}^{1} \esp{\< D^{(2)}
                                        \PP_{t}f\bigl(\dive{\nu} u+ru (x)\bigr)
                                        - D^{(2)} \PP_{t} f\bigl(\dive{\nu} u\bigr)\,,\,h_{j}\otimes h_{k}\>_{H^{\otimes (2)}}} \\
                                      \shoveright{\times u_{j}(x) u_{k}(x) \dif r \dif \nu(x)\dif t}\\
                                      \le \sum_{j,k=1}^{n}
                                      \|h_{j}\|_{L^{2}}\|h_{k}\|_{L^{2}}
                                      \int_{E}\|u(x)\|_{W}|u_{j}(x)\,u_{k}(x)|\dif
                                      \nu(x) \left(\int_{0}^{T} r\dif
                                        r\right)\left(\int_{0}^{\infty }
                                        e^{-2t}\dif t\right).
                                    \end{multline*}
                                    Observing that
                                    \begin{equation*}
                                      \|u(x)\|_{\fs}\le \sum_{l=1}^{n}|u_{l}(x)|\,\|h_{l}\|_{\fs}=n^{-1/2}\sum_{l=1}^{n} |u_{l}(x)|,
                                    \end{equation*}
                                    the result follows by recalling that
                                    $\|h_{j}\|_{L^{2}}\le n^{-1/2}$ for all
                                    $j\in \{1,\cdots,n\}$.
                                    \qed
                                  \end{proof}



                                  \section{Proof of Theorem \ref{thm:mainMM1}}
                                  \label{sec:proofMM1}
                                  We now turn to the proof of Theorem
                                  \ref{thm:mainMM1}. 
                                  Fix $T\le \frac{x}{\mu-\lambda}$.
                                  Then for all $n\in\N^*$ we readily have that
                                  \begin{equation}
                                    \dist_{\SP}\left(Z^{\dag}_n,B\right)
                                    \leq \dist_{\SP}\left(Z^{\dag}_n,\Pi_nZ^{\dag}_{n}\right) +\dist_{\Sigma}(\Pi_nZ^{\dag}_{n},\Pi_nB)
                                    +\dist_{\Sigma}(\Pi_n B,B).\label{eq:triangMM1}
                                  \end{equation}
                                  First observe that the function
                                  $\overline{L^\dag}$ is affine, and hence
                                  coincides with $\Pi_n \overline{L^\dag}$ on
                                  $[0,T]$. Moreover, the operator $\Pi_n$ is
                                  linear and the elements of $\SP$ are
                                  1-Lipschitz-continuous, thus we have that for
                                  all $n$,
                                  \begin{align}
                                    \dist_{\SP}(Z^{\dag}_{n},\Pi_nZ^{\dag}_{n}) \le \esp{\parallel Z^{\dag}_n - \Pi_nZ^{\dag}_n\parallel_{W}}
                                    & \le{1\over \sqrt{n(\lambda+\mu)}} \esp{\parallel L^{\dag}_n - \Pi_nL^{\dag}_n\parallel_{W}}\nonumber\\
                                    & \le {c \log n \over \log \log n\,\sqrt{n}},\label{eq:MM1_1}
                                  \end{align}
                                    where the last inequality follows from applying Lemma \protect{\ref{thm:BD}}
    to the Markov processes $\suite{L^{\dag}}{n}{1}$ for $J\equiv 1$ and $\alpha
    \equiv \lambda\vee
    \mu$. 
    Now, for any $n\in \mathbb N^*$, if we let $\tau^n_0=\inf\{t>0,\,
    L^{\dag}_{n}(t)=0\}$, for any $F \in \Sigma$ we have that
    \begin{multline}
      \E\left[\left|F\left(\Pi_nZ^{\dag}_n\right) - F\left(\Pi_nB\right)\right|\right]\\
      \shoveleft{=\E\left[\left|F\left(\Pi_nZ^{\dag}_n\right) -
          F\left(\Pi_nB\right)\right|\mathbf 1_{\{T < \tau^n_0\}}\right]}\\
      +\E\left[\left|F\left(\Pi_nZ^{\dag}_n\right) -
          F\left(\Pi_nB\right)\right|\car_{\{T\ge \tau^n_0\}}\right].
      \label{Conditioning of Z^n}
    \end{multline}
    We first prove that for some $c>0$,
    \begin{equation}
      \label{eq:MM1before}
      \E\left[\left|F\left(\Pi_nZ^{\dag}_n\right) - F\left(\Pi_nB\right)\right|\mathbf 1_{\{T<\tau^n_0\}}\right]\leq {c \over \sqrt{n}},\,\,n\in\N^*.
    \end{equation}
    Fix $n\in \N^*$. On the event $\{T < \tau_0^n\}$, for any
    $t\in \left[0,T\right)$ we have that
    \begin{align*}
      Z^{\dag}_{n}(t)
         &=\frac{1}{\sqrt{\lambda+\mu}}\left(\sqrt{\lambda}\left(\frac{\Pois_{n\lambda}(t)}{\sqrt{\lambda n}}-\sqrt{\lambda n} t \right)
           -\sqrt{\mu}\left(\frac{\Pois_{n\mu}(t)}{\sqrt{\mu n}}-\sqrt{\mu n} t\right)\right)\\
         &=: \frac{1}{\sqrt{\lambda+\mu}}\left(Z^{\dag}_{\lambda, n}(t) - Z^{\dag}_{\mu, n}(t)\right).
    \end{align*}
    To apply Theorem \ref{thm:stein_final}, it is useful to represent the processes $Z^\dag_n$, $n\ge 1$ as marked Poisson processes.
    For this, we fix $n\in\N^*$, and let $N_{n(\lambda+\mu)}^{\dag}$ be the marked Poisson point process on
    $[0,T] \times \{-1,1\}$ of control measure
    \begin{equation*}
      \dif\nu^{\dag}_{n}(s,r)=n(\lambda+\mu) \dif s\otimes \Bigl(\frac{\lambda}{\lambda+\mu}\,\varepsilon_{1}(\!\dif r)+\frac{\mu}{\lambda+\mu}\,\varepsilon_{-1}(\!\dif r)\Bigr),
    \end{equation*}
    that is, an ordinary Poisson process on the positive half-line with intensity $n(\lambda+\mu)$,
    such that each atom is assigned a mark $+1$ or $-1$, independently of everything else,
    with respective probability $\lambda(\lambda+\mu)^{-1}$ and $\mu(\lambda+\mu)^{-1}$. By the thinning
    property of Poisson processes, the point process counting the atoms of
    $N_{n(\lambda+\mu)}^{\dag}$ with mark $+1$ (respectively $-1$) is Poisson
    of intensity $n\lambda$ (respectively $n\mu$).
    For any $t\in[0,T]$, let
   \[v_{t}\, :\,
      \left\{\begin{array}{ll}
      [0,T]\times \{-1,1\}&\longrightarrow \R\\
      (s,r)&\longmapsto \frac{1}{\sqrt{n(\lambda+\mu})}\ r\, \car_{[0,t)}(s),
    \end{array}\right.\]
    and define for all $i=1,\cdots,n$,
    \begin{equation*}
      u^{\dag}_{i}(s,r)=\sqrt{\frac{n}{T}}\Bigl(v_{t^n_i}(s,r)- v_{t^n_{i-1}}(s,r)\Bigr)
      = \frac{1}{\sqrt{T(\lambda+\mu)}}\ r\, \car_{\left[t^n_{i-1},t^n_i\right)}(s).
    \end{equation*}
    Then, it is easily checked that
    \begin{equation*}
      Z^{\dag}_{n}(t) \stackrel{\text{dist}}{=}\Delta^*_{\nu^{\dag}_{n}}v_{t},\,t\le T,
    \end{equation*}
    which, recalling (\ref{eq:interpol}) and (\ref{eq:defhni}), yields to
     \begin{equation*}
      \Pi_nZ^{\dag}_{\lambda, n} \stackrel{\text{dist}}{=}\sum_{i=1}^{n}\Delta^*_{\nu^{\dag}_{n}}u^{\dag}_{i} \ h_{i}^{n}.
    \end{equation*}
    It is then clear that for all $i,j \le n$,
    \begin{equation*}
      \int_{[0,T]\times \{-1,1\}}u^{\dag}_{i}(s,r)u^{\dag}_{j}(s,r)\dif \nu^{\dag}_{n}(s,r)=\delta_{ij},
    \end{equation*}
    so $\{u^\dag_i,\,i=1,\cdots,n\}$ is an orthogonal family.
    Moreover, comparing (\ref{eq:interpol}) to (\ref{eq_fluid_queueing_BDM2:12}), we readily obtain that $\Pi_nB \stackrel{\text{dist}}{=} B_{\xi^\dag}$ when letting $\xi^{\dag}_{j,n}=1$ for all
    $j=1,\cdots,n$.
    Consequently, (\ref{eq:MM1before}) follows from Theorem \protect{\ref{thm:stein_final}} and the fact that
    \begin{equation*}
      \sum_{j,k,l=1}^{n} \int_{E}
      |u^{\dag}_ju^{\dag}_{k}u^{\dag}_{l}| \dif \nu^{\dag}_{n}=\frac{1}{T^{3/2}\sqrt{\lambda+\mu}}\sum_{i=1}^{n} \int_{t^n_{i-1}}^{t^n_i} n \dif s=\frac{n}{\sqrt{T(\lambda+\mu)}}\cdotp
    \end{equation*}
    Regarding the second term on the right-hand side of \eqref{Conditioning of
      Z^n}, observe that $F$ is in particular bounded, so there
    exists a constant $c'$ such that for all $n\in\N^*$,
    \[\E\left[\left|F(\Pi_nZ^{\dag}_{n})-F(\Pi_nB)\right| \car_{\{T >
          \tau^n_0\}}\right] \le c\, \pr{T> \tau^n_0}.\] But $\pr{T > \tau_0^n}$
    tends to 0 with exponential speed from Theorem 11.9 of
    \cite{shwartz_large_1995}: if $\rho<1$, for any $x>0$ and any $y<0$,
$$\lim_{n \rightarrow \infty} \frac{1}{n}\ \mbox{log}\ \pr{\tau_0^n \leq \frac{x}{\lambda-\mu}+y}=-f(y),$$
where $f$ is strictly positive on $(0,\infty)$. This shows that for some $c''$,
$$\E\left[\left|F(Z^{\dag}_{n})-F(B)\right| \car_{\{T > \tau^n_0\}}\right]\leq c''e^{-n}$$
for all $n$ which, together with (\ref{eq:MM1before}) in \eqref{Conditioning of
  Z^n}, shows that for some constant $c$, for all $n\in\N^*$,
\begin{equation*}
  \dist_{\Sigma}(\Pi_nZ^{\dag}_{n},\Pi_nB) \le {c \over \sqrt{n}}.
\end{equation*}
This, together with (\ref{eq:MM1_1}) and (\ref{dist B, Bgamma}) in
(\ref{eq:triangMM1}), concludes the proof.

\section{Proof of Theorem \ref{thm:mainMMinfty}}
\label{sec:proofMMinfty}

We now turn to the speed of convergence in the diffusion approximation of the infinite server queue. Fix $T>0$ throughout this section.

\subsection{An integral transformation}
We know from eq. (6.23) of \cite{robert_stochastic_2003} that
the sequence of processes $\suite{Y^{\sharp}}{n}{1}$ defined for all $n\ge 1$ by
  \begin{equation}
    \label{eq:defYN}
    t\mapsto Y^{\sharp}_n(t) := Z^\sharp_n(t)-Z^{\sharp}_n(0) +\mu \int_{0}^{t}Z^{\sharp}_n(s)\dif s
  \end{equation}
  converges in distribution to the time-changed standard brownian motion $B\circ \gamma$, where
  \begin{equation}
    \label{eq:defgamma}
    \gamma (t)=2\lambda t -{\lambda \over \mu}(1-e^{-\mu t}),\quad t\ge 0.
  \end{equation}
  This integral transformation of the processes $Z^\sharp_n$, $n\ge 1$ will turn out to be useful
  to bound the rate of convergence of $\{Z^\sharp_n\}$ to the Ornstein-Uhlenbeck process $Z^\sharp$ defined by (\ref{ProcessusX}).
  Specifically, as will be shown below, the latter rate of convergence
  is in fact bounded by that of $\{Y^\sharp_n\}$ to the time-changed brownian motion $B\circ\gamma$.
   First observe that
\begin{proposition}
  \label{prop:bijTheta}
  The mapping
  \[\Theta:\left\{\begin{array}{ll}
                    \D&\longrightarrow \R \times \D^0_T\\
                    f &\longmapsto \left(f(0)\,,\,f(.)-f(0)+\mu \displaystyle\int_{0}^{.}f(s)ds\right)
                  \end{array}
                \right.\] is linear, continuous (for the Skorohod topology on $\D$), and one to one.
            \end{proposition}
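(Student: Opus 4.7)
The plan is to verify linearity, injectivity, and continuity in turn, and then exhibit an explicit inverse which makes $\Theta$ a bijection (and will presumably be needed in the sequel). Linearity is immediate, since both coordinates are linear operations on $\D([0,T])$.

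For injectivity, by linearity it suffices to show that $\Theta(f)=(0,0)$ forces $f\equiv 0$. If $f(0)=0$ and $f(t)+\tau\int_{0}^{t}f(s)\dif s=0$ for all $t\in[0,T]$, then $f$ equals minus an absolutely continuous function, hence is itself continuous and in fact $C^{1}$; differentiating yields the linear ODE $f'=-\tau f$ with $f(0)=0$, whose unique solution is $f\equiv 0$. Equivalently, Gronwall's inequality applied to $|f(t)|\le \tau\int_{0}^{t}|f(s)|\dif s$ gives the same conclusion directly.

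For continuity with respect to the Skorokhod metric (\ref{eq:skorokhodDistance}), I handle the two coordinates separately. If $\dist_{\D}(f_{n},f)\to 0$, there exist $\phi_{n}\in\Hom_{T}$ with $\|\Id-\phi_{n}\|_{\infty}\to 0$ and $\|f_{n}-f\circ\phi_{n}\|_{\infty}\to 0$. Since $\phi_{n}(0)=0$, this gives $f_{n}(0)=(f\circ\phi_{n})(0)+o(1)=f(0)+o(1)$, so the first coordinate is continuous. For the second coordinate,
\[
\Bigl\|\int_{0}^{\cdot}(f_{n}(s)-f(s))\dif s\Bigr\|_{\infty}\le \int_{0}^{T}|f_{n}-f\circ\phi_{n}|\dif s+\int_{0}^{T}|f\circ\phi_{n}-f|\dif s;
\]
the first term is bounded by $T\|f_{n}-f\circ\phi_{n}\|_{\infty}\to 0$, while the second tends to $0$ by dominated convergence, since the discontinuity set of $f\in\D$ is at most countable (hence Lebesgue-negligible) and $\phi_{n}\to\Id$ uniformly. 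Moreover, the integral sends $\D$ into $\cont_{0}\subset\D_{0}$, so the second coordinate is continuous into $\D_{0}([0,T])$.

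Finally, given $(a,g)\in\R\times\D_{0}([0,T])$, setting $F(t)=\int_{0}^{t}f(s)\dif s$ turns the equation $\Theta(f)=(a,g)$ into the linear ODE $F'=a+g-\tau F$ with $F(0)=0$, solved by variation of constants to give
\[
f(t)=a\,e^{-\tau t}+g(t)-\tau\int_{0}^{t}e^{-\tau(t-s)}g(s)\dif s,
\]
which lies in $\D([0,T])$ and satisfies $\Theta(f)=(a,g)$; this exhibits $\Theta^{-1}$ explicitly and shows that $\Theta$ is in fact a bijection. The only mild obstacle is the Skorokhod continuity of the integration operator at jump points of $f$, handled above via a split through $f\circ\phi_{n}$ followed by dominated convergence; all other steps are routine.
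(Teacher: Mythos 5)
Your proof is correct and follows essentially the same route as the paper: the heart of the argument in both cases is the explicit variation-of-constants formula $f(t)=a\,e^{-\tau t}+g(t)-\tau\int_{0}^{t}e^{-\tau(t-s)}g(s)\,\dif s$, which exhibits the inverse and yields bijectivity. The paper dismisses linearity and continuity as ``straightforward,'' whereas you spell out the Skorokhod continuity carefully; this is worthwhile, since addition is not jointly continuous in the Skorokhod topology, and your observation that the integral term lands in $\cont_{0}$ (so a time change costs only $o(1)$ by uniform continuity) is exactly the point that makes the second coordinate continuous. One small gap: to conclude $\dist_{\D}(\Theta_2(f_n),\Theta_2(f))\to 0$ you should bound $\|\Theta_2(f_n)-\Theta_2(f)\circ\phi_n\|_{\infty}$ rather than $\|\int_0^{\cdot}(f_n-f)\|_{\infty}$; the extra term $\int_{\phi_n(t)}^{t}f$ is controlled by $\|\Id-\phi_n\|_{\infty}\|f\|_{\infty}\to 0$, so the conclusion holds, but the step is worth writing down. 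Your separate Gronwall argument for injectivity is logically redundant once the explicit inverse is produced, but it does no harm.
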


            \begin{proof} Let us fix $\eta\in \D^0_T$ and consider the
              following integral equation of unknown function $z$,
              \begin{equation*}
                z(t)-z(0)=- \mu\int_{0}^{t}z(s)\dif s+\eta(t) .
              \end{equation*}
              We clearly have for all $t\ge0$,
$$
z(t)=z(0)e^{-\mu t}+\eta(t)-\mu\int_{0}^{t}e^{-\tau(t-s)}\eta(s)\dif s,
$$
hence $\Theta$ is bijective and for all $(x,\eta)\in \R \times \D^0_T$,
\begin{eqnarray}
  \Theta^{-1}(x,\ \eta)=\left(t \longmapsto xe^{-\mu t}+\eta(t)-\mu\int_{0}^{t}e^{-\mu(t-s)}\eta(s)\dif s\right).
\end{eqnarray}
Linearity and continuity are then straightforward.
\hfill \qed \end{proof}
 Also,
\begin{lemma}
  On the subset of $\{0\}\times\Theta(\D)$ whose image
  by $\Theta^{-1}$ is in $\D$, $\Theta^{-1}$ is linear
  and continuous.
\end{lemma}
\begin{proof}
  For all $\eta$, $\omega\in \Theta(\D)$ and all $t\le T$, we have that
  $$ \Theta^{-1}(0,\eta)(t)-\Theta^{-1}(0,\omega)(t)=\eta(t)-\omega(t)-\mu\int_{0}^{t}e^{-\mu(t-s)}(\eta(s)-\omega(s))\dif s.$$
Hence, by an immediate change of variable we get that
$$\parallel \Theta^{-1}(0,\eta)-\Theta^{-1}(0,\omega)\parallel_{W}<\parallel\eta-\omega\parallel_{W}+\mu\parallel\eta-\omega\parallel_{W}\sqrt{\int_{0}^{T}e^{-2\mu s}\dif s},$$
so that for some positive constant $k$,
$$ \parallel
\Theta^{-1}(x,\eta)-\Theta^{-1}(y,\omega)\parallel_{W}<k\parallel\eta-\omega\parallel_{W}.$$
This completes the proof.
\hfill \qed\end{proof}
We obtain the following,
\begin{corollary}
  These exists a  positive constant $c$ such that  that for all $n\in\N^*$,
  \[\dist_{\Sigma}(\Pi_nZ^{\sharp}_n,Z^{\sharp})\leq c
    \,\dist_{\Sigma}(\Pi_nY^{\sharp}_n,B\circ\gamma).\]
  \label{Corollary Theta}
\end{corollary}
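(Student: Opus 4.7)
The strategy is to leverage the bijection $\Theta$ from Proposition \ref{prop:bijTheta} (taken with $\tau=\mu$), which converts $Z^{\sharp}_n$ and $X^{\sharp}$ into $Y^{\sharp}_n$ and $B\circ\gamma$ respectively, so as to pull back any test function $F\in\Sigma_{\eta,p}$ applied to $\pi_n Z^{\sharp}_n$ or $X^{\sharp}$ to a test function on the $\D_0$-component of $\Theta$. The initial condition $x_0=0$ is essential, so that both images under $\Theta$ live in the slice $\{0\}\times\D_0$ where the previous Lemma ensures the Lipschitz property of $\Theta^{-1}$.

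First I would establish the two identifications. The very definition \eqref{eq:defYN} of $Y^{\sharp}_n$ gives $\Theta(Z^{\sharp}_n)=(0,Y^{\sharp}_n)$, hence $Z^{\sharp}_n=\Theta^{-1}(0,Y^{\sharp}_n)$. For the limit process, applying It\^o's formula to $X^{\sharp}$ defined in \eqref{ProcessusX} yields $\Theta(X^{\sharp})=(0,\int_0^{\cdot}\sqrt{h(s)}\dif B(s))$, and since this stochastic integral is a centered Gaussian martingale with deterministic quadratic variation $\int_0^{\cdot}h(s)\dif s=\gamma$, it equals $B\circ\gamma$ in distribution, so $X^{\sharp}\stackrel{\text{d}}{=}\Theta^{-1}(0,B\circ\gamma)$. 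Given $F\in\Sigma_{\eta,p}$, define the pulled-back test function $\tilde F(\omega):=F(\Theta^{-1}(0,\omega))$. The previous Lemma provides a Lipschitz constant $k$ for $\Theta^{-1}(0,\cdot)$, and the linearity and continuity of $\Theta^{-1}$ transport the Malliavin regularity, so $\tilde F/k\in\Sigma_{\eta,p}$. Chaining these identifications,
\[
\bigl|\mathbf E[F(\pi_n Z^{\sharp}_n)]-\mathbf E[F(X^{\sharp})]\bigr|
=\bigl|\mathbf E[\tilde F(\pi_n Y^{\sharp}_n)]-\mathbf E[\tilde F(B\circ\gamma)]\bigr|
\le k\,\dist_{\Sigma_{\eta,p}}(\pi_n Y^{\sharp}_n,B\circ\gamma),
\]
and taking the supremum over $F\in\Sigma_{\eta,p}$ concludes with $c=k$.

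The main obstacle is the tacit identification $\pi_n Z^{\sharp}_n=\Theta^{-1}(0,\pi_n Y^{\sharp}_n)$ in the first equality above, since $\Theta^{-1}$ contains the convolution $\mu\int_0^t e^{-\mu(t-s)}\eta(s)\dif s$, which is not preserved by the piecewise-linear interpolation $\pi_n$. One must either argue directly that for the specific piecewise-affine trajectories at hand this identity is exact, or control $\|\pi_n Z^{\sharp}_n-\Theta^{-1}(0,\pi_n Y^{\sharp}_n)\|_{\fs_{\eta,p}}$ by a negligible $O(1/n)$ residual that is absorbed in the final rate. A secondary care is needed to check that $\tilde F$ still satisfies the second-order Lipschitz condition \eqref{eq_preliminaries:5} defining $\Sigma_{\eta,p}$: this follows since $\Theta^{-1}(0,\cdot)$ is a bounded affine operator, so the first and second Fr\'echet derivatives transfer by the chain rule with a uniform constant.
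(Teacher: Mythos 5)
Your approach is essentially the same as the paper's: use $\Theta(Z^\sharp_n)=(0,Y^\sharp_n)$ from the definition \eqref{eq:defYN}, the identity $\Theta(X^\sharp)=(0,B\circ\gamma)$ from Robert's book, the Lipschitz continuity of $\Theta^{-1}(0,\cdot)$ established in the preceding Lemma, and the initial condition $x_0=0$ to stay on the slice $\{0\}\times\D_0$. You have also correctly put your finger on the weak point of this route, and the paper's proof does not address it either: it invokes ``the linearity of $\Theta$ and of the operator $\pi_n$,'' but linearity of two operators does not make them commute, and here they genuinely do not. Since $\Theta(f)_2 = f - f(0) + \mu\, I^1_{0^+}f$, the function $\Theta(\pi_n Z^\sharp_n)_2$ is piecewise quadratic, whereas $\pi_n Y^\sharp_n = \pi_n\bigl(\Theta(Z^\sharp_n)_2\bigr)$ is piecewise affine; even at the grid points $t_i=iT/n$ the two differ by $\mu\int_0^{t_i}\bigl(\pi_n Z^\sharp_n - Z^\sharp_n\bigr)(s)\,\dif s$, which is generically nonzero. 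Hence the inequality of the Corollary, read literally, does not follow from the Lipschitz transfer alone; an additional commutation residual must be controlled. Your proposed repair is the right one: bound $\|\Theta(\pi_n Z^\sharp_n)_2 - \pi_n Y^\sharp_n\|_{\infty,T}$ by a constant multiple of $\|\pi_n Z^\sharp_n - Z^\sharp_n\|_{\infty,T}$ plus a within-cell oscillation of the same order, and then Lemma \ref{thm:BD} gives a rate $O\bigl(\log n/(\log\log n\,\sqrt n)\bigr)$ that is absorbed in the final bound of Theorem \ref{thm:mainMMinfty}; strictly speaking the Corollary should therefore carry this extra additive term, even though it does not affect the end result. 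Your secondary concern — that $\tilde F = F\circ\Theta^{-1}(0,\cdot)$ inherits membership in $\Sigma_{\eta,p}$ up to a multiplicative constant because $\Theta^{-1}(0,\cdot)$ is a bounded linear operator on the spaces involved (so that the chain rule transfers condition \eqref{eq_preliminaries:5}) — is also a real point the paper leaves implicit, and your argument for it is the correct one.
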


\begin{proof}
 In view of the weak convergence $Z^{\sharp}_{n} \Rightarrow B\circ\gamma$,
 the linearity and continuity of $\Theta$ and the Continuous Mapping Theorem,
 we have the weak convergence
  \[\Theta(Z^{\sharp}_{n})=(0, Y^{\sharp}_{n}) \Rightarrow
    \left(0,B\circ\gamma\right).\]
    However, expression (6.34) in \cite{robert_stochastic_2003} shows that
  for all $t$, $\Theta\left(Z^{\sharp}\right)=\left(0,B\circ\gamma\right)$
  which, together with the linearity of $\Theta$ and of the operator $\Pi_n$ for
  all $n$, concludes the proof.
\hfill \end{proof}

\subsection{Alternative representation}
With Corollary \ref{Corollary Theta} in hand, we are rendered to assess the rate of convergence
of $\suite{Y^\sharp}{n}{0}$ to the time-changed brownian motion $B\circ \gamma$.
For that purpose, we aim at applying again Theorem \ref{thm:stein_final} and, as above,
it is useful for this to view the processes $L^{\sharp}_n$, $n\ge 1$
as simple functions of marked Poisson processes.

Specifically, following Section 7.2 of \cite{robert_stochastic_2003}, we have the
following alternative representation of the process $L^\sharp$:
A point $(x,z)$ represents a customer arriving at time $x$ and requiring a
service of duration $z$, and we let $\Pois_{\lambda, \mu}$ be a Poisson process on
$\mathbb{R}^{+}\times \mathbb{R}^{+}$ of control measure $\lambda \dif x\otimes
\mu e^{-\mu z}\dif z$. At any time $t\ge 0$, the number of busy servers at $t$
equals the number of points located in the shaded trapeze bounded by the axes of
equation $x=0$ and $x=t$, and above the line $z=t-x$: in other words,
$$L^{\sharp}(t)=\int_{C_{t}}\dif \Pois_{\lambda, \mu}(x,z),\,t\ge 0,$$ 
where
\begin{equation}
  \label{eq:defCt}
  C_{t}=\{(x,z), 0 \leq x \leq t, z\geq t-x\}.
\end{equation}


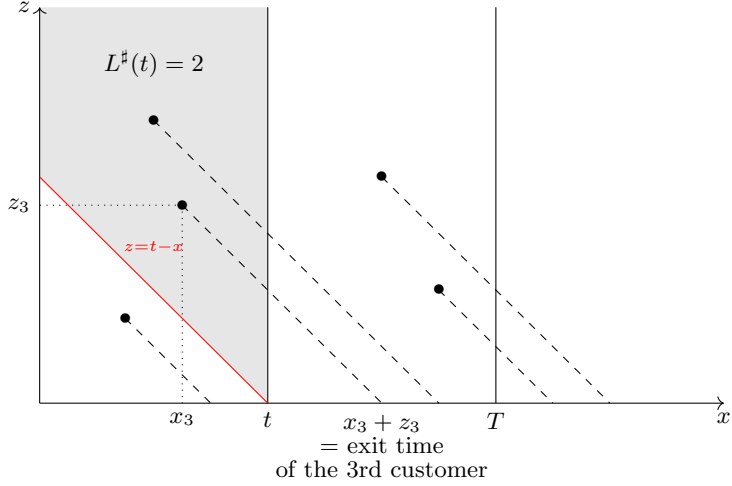
\begin{figure}[!ht]
  \label{fig=representation}
  \begin{center}
    \begin{tikzpicture}[font=\fontsize{9}{9}\selectfont,scale=0.75]

      \fill[color=black!10] (0,7) -- (0,4) -- (4,0) -- (4,7) --cycle; \draw[->]
      (0,0)--(12,0) node[below] {$x$} ; \draw[->] (0,0)--(0,7) node[left] {$z$};
      \draw[dashed] (7,2) -- (9,0);

      \draw (4,0) node[below] {$t$} -- (4,7); \draw[dotted](2.5,3.5) --
      (2.5,0)node[below] {$x_{3}$}; \draw[dotted](2.5,3.5) -- (0,3.5)node[left]
      {$z_{3}$};
    \draw (6,0.2) -- (6,-0.5) node[below] {
        \begin{tabular}{c}
          $x_{3}+z_{3}$\\
          = exit time
          of the 3rd customer
        \end{tabular} };
      \draw[color=red] (0,4) -- node[above]
      {\raisebox{0.4cm}{$\scriptstyle{z=t-x}$}} (4,0) ; \draw[dashed]
      (2.5,3.5)-- (6,-0) ;
      \draw[dashed] (2,5)-- (7,0) ;
      \draw[dashed] (6,4)-- (10,0) ;
      \draw[dashed] (1.5,1.5)-- (3,0) ;
      \draw (8,7) -- (8,0) node[below] {$T$};
      \foreach \Point in {(2.5,3.5), (2,5), (6,4), (1.5,1.5), (7,2)}{
        \node at \Point {\textbullet};}
      \node at (2,6)  {$L^{\sharp}(t)=2$};
    \end{tikzpicture}
    \caption{Representation of the $M/M/\infty$ queue}
  \end{center}
\end{figure}

Fix a positive integer $n$ throughout this section. After scaling, for all $t\ge
0$ we get that
$$\overline {L^{\sharp}_n}(t)=\frac{1}{n}\Pois_{\lambda n,\mu}(C_{t}).$$
\noindent Let us denote for all $(x,z)$ in the positive orthant by
\[\dif\nu^{\sharp}_n(x,z):=\lambda n \dif x\otimes \mu e^{-\mu z}\dif z,\]
the control measure of $\Pois_{n\lambda,\mu}$. As readily follows from
(\protect{\ref{eq:limfluinfty}}), the fluid limit $\overline{L^{\sharp}}$ can be
written as
\[\overline{L^{\sharp}}(t)= {1\over n}\int \mathbf{1}_{C_{t}}(x,z)\dnn(x,z),\,t\ge 0,\]
in a way that
\begin{equation}
  Z^{\sharp}_{n}(t)=\frac{1}{\sqrt{n}}\int\mathbf{1}_{C_{t}}\left( \dif \Pois_{\lambda
      n,\mu}-\dnn \right),\,t\ge 0,
\end{equation}
for $C_{t}$ defined by (\protect{\ref{eq:defCt}}). We deduce that for all $t\ge
0$,
\begin{multline}
  Y^{\sharp}_{n}(t) = \,\frac{1}{\sqrt{n}}\int\mathbf{1}_{C_{t}}\left(\dif\Pois_{\lambda n,\mu}-\dnn\right)
 +\mu
    \int_{0}^{t}\frac{1}{\sqrt{n}}\int\mathbf{1}_{C_{s}}\left(\dif\Pois_{\lambda
        n,\mu}- \dnn\right)\dif u\\
    =\,\frac{1}{\sqrt{n}}\, \dive{\lambda n,\mu}( \mathbf{1}_{C_{t}})+ \mu
    \int_{0}^{t}\frac{1}{\sqrt{n}}\dive{\lambda n,\mu}( \mathbf{1}_{C_{u}})\dif
    u, \label{Y_NGeometrique}
\end{multline}
where $\dive{\lambda n,\mu}$ is defined by (\ref{eq:defdive}).


\subsection{Reduction to the finite dimension}
\label{subsec:MMinfty}

Fix $n\in \N^*$ and recall (\ref{eq:interpol}).
It follows from \eqref{Y_NGeometrique} that
\begin{align*}
  \Pi_nY^{\sharp}_{n}
                     &= \sum_{i=1}^{n}{1\over \sqrt{T}}\left(\Dive{\lambda n,\mu}{\mathbf{1}_{C_{t^n_i}}-\mathbf{1}_{C_{t^n_{i-1}}}}+ \mu \int_{t^n_{i-1}}^{t^n_i}\dive{\lambda n,\mu}(\mathbf{1}_{C_{u}})\dif u\right)h^n_i\\
                     &=\sum_{i=1}^{n}\dive{\lambda n,\mu} (u^{\sharp}_i)\,h^n_i,
\end{align*}
where for all $i=1,\cdots,n$ and all $(x,z)\in{\mathbb{R}^2}$,
\begin{equation}
  u^{\sharp}_i(x,z) =\frac{1}{\sqrt{T}}\left( \mathbf{1}_{C_{t^n_i}}(x,z)-\mathbf{1}_{C_{t^n_{i-1}}}(x,z)+\mu\int_{t^n_{i-1}}^{t^n_i}\mathbf{1}_{C_{u}}(x,z)\du \right).
  \label{eq:defuMMinfty}
\end{equation}
Let us denote for any
$i=1,\cdots,n$, 
$$\xi^{\sharp}_{i,n}:=\sqrt{\gamma\left(t^n_i\right)-\gamma\left(t^n_{i-1}\right)}.$$
The following result is proven in appendix \protect{\ref{subsec:proofMMinfty}},
\begin{proposition}
  \label{prop:ui}
  For any $n$, the family $\left(u^{\sharp}_i,\,i=1,\cdots,n\right)$ has the
  following properties:
  \begin{enumerate}
  \item[(i)] It is orthogonal in $L^{2}\left(\nu^{\sharp}_n\right)$;
  \item[(ii)] For some constant $c$ independent of $n$,
    \begin{equation*}
      \sum_{i=1}^{n} \sum_{j=1}^{n}\sum_{k=1}^{n} \int_{E}
      |u^{\sharp}_iu^{\sharp}_{j}u^{\sharp}_{k}|\dnn\leq n c.
    \end{equation*}
  \item[(iii)] For any $i \in \{1,\cdots,n\}$,
    \begin{equation*}
      \int\int u^{\sharp}_iu^{\sharp}_i\dnn=\frac{n}{T}\,(\xi^{\sharp}_{i,n})^2.
    \end{equation*}
  \end{enumerate}
\end{proposition}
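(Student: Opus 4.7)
The plan is to introduce the auxiliary function $V_t(x,z) := \car_{C_t}(x,z) + \mu\int_0^t \car_{C_s}(x,z)\,ds$, so that with $t_i := iT/n$ the definition \eqref{eq:defuMMinfty} reads $u^\sharp_i = T^{-1/2}(V_{t_{i+1}}-V_{t_i})$. Since $\car_{C_s}(x,z) = \car_{\{x\le s\le x+z\}}$, Fubini yields the closed form
\[
V_t(x,z) = \car_{\{x\le t\}}\bigl[\car_{\{z\ge t-x\}}(1+\mu(t-x)) + \car_{\{z < t-x\}}\mu z\bigr].
\]

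The crux of (i) and (iii) is the Gram identity $\langle V_s,V_t\rangle_{L^2(\lambda\,dx\otimes \mu e^{-\mu z}dz)} = \gamma(s\wedge t)$ for $s,t\in[0,T]$. For $0\le s\le t$, the product $V_sV_t$ vanishes for $x > s$; for $x\in[0,s]$ I split the $z$-integral into the regions $[0,s-x)$, $[s-x,t-x)$, $[t-x,\infty)$, on which $V_sV_t$ equals respectively $\mu^2z^2$, $\mu z(1+\mu(s-x))$ and $(1+\mu(s-x))(1+\mu(t-x))$. Elementary exponential integration and a clean cancellation yield $\int_0^\infty V_sV_t\,\mu e^{-\mu z}dz = 2-e^{-\mu(s-x)}$, and integrating against $\lambda\,dx$ on $[0,s]$ returns $2\lambda s - \rho(1-e^{-\mu s}) = \gamma(s)$. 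Granted this identity, (i) follows because for $i<j$ one has $t_{i+1}\le t_j$, so bilinearity gives $\langle V_{t_{i+1}}-V_{t_i},V_{t_{j+1}}-V_{t_j}\rangle = \gamma(t_{i+1})-\gamma(t_{i+1})-\gamma(t_i)+\gamma(t_i) = 0$; and (iii) follows because $\int (u^\sharp_i)^2\dnn = (n/T)\|V_{t_{i+1}}-V_{t_i}\|^2 = (n/T)(\gamma(t_{i+1})-\gamma(t_i)) = (n/T)(\xi^\sharp_i)^2$.

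For (ii), interchange sums and integral to write
\[
\sum_{i,j,k}\int |u^\sharp_iu^\sharp_ju^\sharp_k|\dnn = \frac{n}{T^{3/2}}\int\Bigl(\sum_i |V_{t_{i+1}}-V_{t_i}|\Bigr)^{3}\lambda\,dx\,\mu e^{-\mu z}\,dz.
\]
For fixed $(x,z)$ with $x\le T$, the map $t\mapsto V_t(x,z)$ has a single upward jump of size $1$ at $t=x$, is affine with slope $\mu$ on $[x,(x+z)\wedge T]$, and (if $x+z\le T$) has a single downward jump of size $1$ at $t=x+z$; its total variation on $[0,T]$ is therefore at most $2+\mu z$, so $\sum_i |V_{t_{i+1}}(x,z)-V_{t_i}(x,z)|\le 2+\mu z$ for $x\le T$ and vanishes for $x>T$. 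Integrating $(2+\mu z)^3$ against $\lambda\,dx\otimes \mu e^{-\mu z}\,dz$ on $[0,T]\times[0,\infty)$ yields a finite constant times $T$, giving the required bound $nc$. The main obstacle is the Gram identity of the second paragraph; the remaining steps are bookkeeping.
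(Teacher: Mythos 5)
Your proof is correct, and it takes a genuinely different and cleaner route than the paper's. The paper proves (i), (ii) and (iii) by brute force: it writes $u^\sharp_i = T^{-1/2}(\alpha_{i+1}-\alpha_i+\mu\beta_i)$, expands each product into four terms ($I_1,\ldots,I_4$) for (i), eight families of integrals ($I^l_{i,j,k}$) for (ii), six terms ($J_1,\ldots,J_6$) for (iii), evaluates each one by elementary integration, and checks the cancellations term by term. You instead package the three statements into a single structural identity, namely
\begin{equation*}
\langle V_s, V_t\rangle_{L^2(\lambda\,dx\otimes\mu e^{-\mu z}\,dz)} = \gamma(s\wedge t),\qquad s,t\in[0,T],
\end{equation*}
with $V_t = \mathbf 1_{C_t}+\mu\int_0^t\mathbf 1_{C_u}\,du$, from which (i) and (iii) drop out by bilinearity and telescoping since $u^\sharp_i = T^{-1/2}(V_{t_{i+1}}-V_{t_i})$. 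This is exactly the statement that $\mathbf E[Y^\sharp_n(s)Y^\sharp_n(t)] = \gamma(s\wedge t)$, i.e.\ that $Y^\sharp_n$ already has the covariance of the target $B\circ\gamma$ for every $n$; (i) is then the analogue of independent increments and (iii) is the increment variance. For (ii) you avoid the combinatorial expansion entirely by observing that $t\mapsto V_t(x,z)$, for $x\le T$, has total variation at most $2+\mu z$ (one unit up-jump at $t=x$, slope $\mu$ on $[x,(x+z)\wedge T]$, at most one unit down-jump at $t=x+z$), so that $\sum_i|V_{t_{i+1}}-V_{t_i}|\le 2+\mu z$ and the whole triple sum is bounded by $\frac{n}{T^{3/2}}\,\lambda T\int_0^\infty(2+\mu z)^3\mu e^{-\mu z}\,dz$. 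I checked the Gram identity (the $z$-integral does collapse to $2-e^{-\mu(s-x)}$ after the advertised cancellation, and then the $x$-integral gives $\gamma(s)$) and the total-variation bound; both are correct. Your argument is shorter, makes the cancellations conceptually transparent, and incidentally dispenses with the paper's ``$T=1$ without loss of generality'' reduction, working directly for general $T$. It is a genuine improvement in exposition over the paper's term-by-term computation.
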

\noindent Notice that for a large enough $n$, for all $t\ge 0$,
\begin{equation*}
  \frac{n}{t}\,(\xi^{\sharp}_{i,n})^2\xrightarrow{i/n\to t} \gamma'(t) \text{ and for a fixed $i$, } \frac{n}{t}\,(\xi^{\sharp}_{i,n})^2\xrightarrow{n\to \infty} \gamma'(0).
\end{equation*}

\noindent We thus have the following result,
\begin{proposition}
  For some $c$, for all positive integer $n$, the respective interpolations of
  $Y^{\sharp}_n$ and $B\circ\gamma$ satisfy
$$\dist_{\Sigma}(\Pi_n Y^{\sharp}_{n}, \Pi_n (B\circ\gamma))\leq \frac{c}{\sqrt{n}}.$$
\label{Rate of convergence YNtilde towards BgammaN}
\end{proposition}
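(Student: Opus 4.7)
The plan is to invoke the functional Stein theorem (Theorem \ref{thm:stein_final}) directly with the orthogonal family $(u^{\sharp}_i)_{i=0,\dots,n-1}$ built in \eqref{eq:defuMMinfty}. Proposition \ref{prop:ui} supplies precisely the two ingredients needed: orthogonality (item (i)) and the cubic sum bound (item (ii)). The only non-mechanical step will be matching, in distribution, the Gaussian comparison $B_\xi$ produced by the theorem with the target $\pi_n(B\circ\gamma)$.

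First, from the reduction preceding Proposition \ref{prop:ui}, $\pi_n Y^{\sharp}_n = \sum_{i=0}^{n-1}\dive{\lambda n,\mu}(u^{\sharp}_i)\,h_i^n$, which is exactly $\dive{\nu^{\sharp}_n} u$ in the notation introduced right before Theorem \ref{thm:stein_final}, with $u=\sum_i u^{\sharp}_i\otimes h_i^n$. Second, I identify $B_\xi$ with $\pi_n(B\circ\gamma)$. Setting $\xi_k^2:=\int (u^{\sharp}_k)^2\,\dnn$ as demanded by \eqref{eq_stein:4}, Proposition \ref{prop:ui}(iii) gives $\xi_k^2=(n/T)(\xi^{\sharp}_k)^2$. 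On the other hand, using \eqref{definition pi_n} together with $h_i^n=\sqrt{n/T}\,I_{0^+}^{1}(\car_{[iT/n,(i+1)T/n)})$, one computes
\begin{equation*}
\pi_n(B\circ\gamma)=\sum_{i=0}^{n-1}\sqrt{\tfrac{n}{T}}\,\bigl((B\circ\gamma)(\tfrac{(i+1)T}{n})-(B\circ\gamma)(\tfrac{iT}{n})\bigr)\,h_i^n.
\end{equation*}
Since $B\circ\gamma$ has independent centered Gaussian increments with variance $\gamma(\tfrac{(i+1)T}{n})-\gamma(\tfrac{iT}{n})=(\xi^{\sharp}_i)^2$, the coefficient of $h_i^n$ is a centered Gaussian of variance $(n/T)(\xi^{\sharp}_i)^2=\xi_i^2$, and these coefficients are independent across $i$. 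Hence $\pi_n(B\circ\gamma)\stackrel{d}{=}B_\xi$.

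Third, the orthogonality of the $u^{\sharp}_i$'s given by Proposition \ref{prop:ui}(i) is exactly the hypothesis of Theorem \ref{thm:stein_final}. Applying that theorem to any $F\in\Sigma_{\eta,p}$ and then inserting the bound from Proposition \ref{prop:ui}(ii) yields
\begin{equation*}
\bigl|\esp{F(\pi_n Y^{\sharp}_n)}-\esp{F(\pi_n(B\circ\gamma))}\bigr|\le \tfrac{1}{2}\,n^{-3/2+\eta}\sum_{i,j,k=1}^{n}\int|u^{\sharp}_i u^{\sharp}_j u^{\sharp}_k|\,\dnn\le \tfrac{c}{2}\,n^{-1/2+\eta},
\end{equation*}
and taking the supremum over $F\in\Sigma_{\eta,p}$ gives the announced bound (the factor $n^{\eta}$ is absorbed by choosing $\eta$ sufficiently small, in parallel with the treatment in the proof of Theorem \ref{thm:mainMM1}).

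The main obstacle is the correct identification $\pi_n(B\circ\gamma)\stackrel{d}{=}B_\xi$: one must carefully track the normalization factor $\sqrt{n/T}$ arising from the basis $(h_i^n)$ and verify that it exactly balances the increment variance $\gamma(\tfrac{(i+1)T}{n})-\gamma(\tfrac{iT}{n})$ of the time-changed Brownian motion so as to match the variance $\xi_i^2$ dictated by \eqref{eq_stein:4}. Once this bookkeeping is in place, the rest is an immediate application of Theorem \ref{thm:stein_final} combined with the cubic estimate of Proposition \ref{prop:ui}.
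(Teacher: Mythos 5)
Your proof is correct and follows essentially the same route as the paper's: identify $\pi_n Y^{\sharp}_n$ with $\delta_{\nu^{\sharp}_n}u$ for $u=\sum_i u^{\sharp}_i\otimes h_i^n$, identify $\pi_n(B\circ\gamma)$ with the Gaussian comparison $B_\xi$ of Theorem~\ref{thm:stein_final}, invoke orthogonality (Proposition~\ref{prop:ui}(i)) to apply that theorem, and close with the cubic estimate (Proposition~\ref{prop:ui}(ii)), using (iii) to verify the variance match. You are in fact slightly more careful than the paper on the identification step: the paper writes $\operatorname{var}(Y^{\sharp}_k)=(\xi^{\sharp}_k)^2$, whereas the correct variance (to match both $\int(u^\sharp_k)^2\,\dif\nu^\sharp_n$ from (iii) and the $\sqrt{n/T}$ normalization of $h^n_i$ in $\pi_n$) is $(n/T)(\xi^{\sharp}_k)^2$, exactly as you compute; fortunately this does not propagate, since the final bound only uses the cubic sum. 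The residual $n^{\eta}$ you flag at the end is a pre-existing feature of the paper (Theorem~\ref{thm:stein_final} genuinely produces $n^{-3/2+\eta}$, and Proposition~\ref{prop:ui}(ii) gives a sum of order $n$, so the honest rate is $n^{-1/2+\eta}$), so your parenthetical is best read as inheriting the paper's convention rather than repairing a gap.
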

\begin{proof}Fix $n\in\N^*$.
  It is an immediate consequence of (\ref{eq:interpol}) and (\ref{eq_fluid_queueing_BDM2:12}) that
$$\pi_{n}(B\circ{\gamma})\stackrel{\text{dist}}{=}\sum_{j=1}^{n}Y^{\sharp}_{j}\, h_{j}^{n}=B_{\xi^{\sharp}},$$
where 
$\left(Y^{\sharp}_{k},\, k=1,\cdots,n\right)$ is a family of independent
centered Gaussian random variables such that
$\text{var}(Y^{\sharp}_{k})=(\xi_{k}^{\sharp})^{2}$ for all $k$. From assertion
(i) of Proposition \protect{\ref{prop:ui}}, we can apply Theorem
\protect{\ref{thm:stein_final}}~: for any $f\in \Sigma$,
\begin{equation*}
  \left|     \esp{f(B_{\xi^{\sharp}})}-\esp{f(\dive{\lambda n,\mu} (u^{\sharp}))} \right|
  \leq  \ {n^{-3/2} T^2\over 4} \sum_{j,k,l=1}^{n} \int_{E}
  |u^{\sharp}_ju^{\sharp}_{k}|\, |u^{\sharp}_{l}|\dnn.
\end{equation*}
Assertions (ii) and (iii) of Proposition \protect{\ref{prop:ui}} allow us to
conclude.
\hfill \end{proof}

\subsection{Proof of Theorem \ref{thm:mainMMinfty}}
\label{subsec:speed}
We are now in position to prove Theorem \ref{thm:mainMMinfty}.  For all $n\in\N^*$,.
We have that
  \begin{multline}
    \label{eq:triangMMinfty}
    \dist_{\SP}(Z^{\sharp}_n,Z^{\sharp})\\
    \begin{aligned}
      &\le \dist_{\SP}(Z^{\sharp}_n,\Pi_nZ^{\sharp}_{n})+\dist_{\Sigma}(\Pi_nZ^{\sharp}_{n},Z^{\sharp})\\
      &\le \dist_{\SP}(Z^{\sharp}_n,\Pi_nZ^{\sharp}_{n})+c\dist_{\Sigma}(\Pi_nY^{\sharp}_{n},B\circ\gamma)\\
      &\le
      \dist_{\SP}(Z^{\sharp}_n,\Pi_nZ^{\sharp}_{n})+c\dist_{\Sigma}(\Pi_nY^{\sharp}_{n},\Pi_n
      B\circ{\gamma})+c\dist_{\Sigma}(\Pi_nB\circ{\gamma},B\circ\gamma),
    \end{aligned}
  \end{multline}
  where we applied Corollary \protect{\ref{Corollary Theta}} in the second
  inequality. Now define the stopping times
  \[\tau^{\sharp}_n = \inf\left\{t \ge 0\,:\,N_{n\lambda}(t) \ge 2\lambda n
      T\right\},\,n\in\N^*.\] Then, as all functions of $\SP$ are bounded and
  Lipschitz continuous we obtain that for all $n$,
  \begin{multline}
    \dist_{\SP}\left(Z^{\sharp}_n,\Pi_nZ^{\sharp}_{n}\right)\\
    \begin{aligned}
      &\le \sup_{F \in \SP}
      \esp{\left|F\left(Z^{\sharp}_n\right)-F\left(\Pi_nZ^{\sharp}_n\right)\right|\mathbf
        1_{\left\{T < \tau^{\sharp}_n\right\}}}
      + c\, \pr{T \ge \tau^{\sharp}_n}\\
      &\le \esp{\parallel Z^{\sharp}_n - \Pi_nZ^{\sharp}_n\parallel_{W} \mathbf 1_{\left\{T < \tau^{\sharp}_n\right\}}}+ c \,\pr{T \ge \tau^{\sharp}_n}\\
      &\le \esp{\parallel Z^{\sharp}_n\left(.\wedge
          \tau^{\sharp}_n\right)-\Pi_n\left(Z^{\sharp}_n\left(.\wedge
            \tau^{\sharp}_n\right)\right)\parallel_{W}\mathbf 1_{\left\{T
            < \tau^{\sharp}_n\right\}}}+ c\, \pr{T \ge \tau^{\sharp}_n}.
    \end{aligned}
    \label{eq:MMinfty_0}
  \end{multline}
  On the one hand, from Tchebychev inequality we have that for all $n$,
    \begin{equation}
    \label{eq:MMinfty_1}
    \pr{T \ge \tau^{\sharp}_n} = \pr{N_{n\lambda}(T) \ge 2\lambda n T} \le {\mbox{Var}\,(N_{n\lambda}(T)) \over (\lambda n T)^2} \le { c \over n}\cdotp
  \end{equation}
  Also, for any $n$, on $\{T < \tau^{\sharp}_n\}$ we have that
  \[L^{\sharp}_n\left(t\wedge \tau^{\sharp}_n\right) \le N_{n\lambda}(t) \le
    2\lambda nT,\] therefore the Markov process $L^\sharp_n\left(.\wedge
    \tau^\sharp_n\right)$ satisfies to the Assumptions of Lemma \ref{thm:BD} for
  $J \equiv 1$ and $\alpha \equiv \lambda \vee (\mu T)$. Thus we obtain as in
  (\ref{eq:MM1_1}) that for all $n$,
  \begin{multline}
    \label{eq:MMinfty_2}
    \esp{\parallel Z^{\sharp}_n\left(.\wedge \tau^{\sharp}_n\right)-\Pi_n\left(Z^{\sharp}_n\left(.\wedge \tau^{\sharp}_n\right)\right)\parallel_{W}\mathbf 1_{\left\{T < \tau^{\sharp}_n\right\}}}\\
    \begin{aligned}
      &\le {1\over \sqrt{n}} \esp{\parallel L^{\sharp}_n - \Pi_nL^{\sharp}_n\parallel_{W}} + \sqrt{n}\parallel \overline{L^{\sharp}} - \Pi_n\overline{L^{\sharp}}\parallel_{W}\\
      & \le {c \log n \over \log \log n\ \sqrt{n}},
      \cdotp \end{aligned}\end{multline}
      where, recalling (\ref{eq:limfluinfty}), we use the fact that
  \begin{align*}
    \sqrt{n}\parallel \overline{L^{\sharp}} - \Pi_n\overline{L^{\sharp}}\parallel_{W}
    &\le 2\sqrt{n} \max_{i\in [0,n-1]}\sup_{t\in \left[t^n_{i};\frac{(i+1){T}}{n}\right]}\left|e^{-\mu t} - e^{-\mu {iT \over n}}\right|\\
    &\le 2\sqrt{n} \left(e^{-{\mu \over n}} - 1\right) \le {c \over \sqrt{n}}.
  \end{align*}
  Finally, gathering (\ref{eq:MMinfty_2}) with (\ref{eq:MMinfty_1}) in
  (\ref{eq:MMinfty_0}) entails that for all $n$,
  \[\dist_{\SP}(Z^{\sharp}_n,\Pi_nZ^{\sharp}_{n}) \le {c \log\,n \over
      \sqrt{n}}\] which, together with with Proposition \protect{\ref{Rate of
      convergence YNtilde towards BgammaN}} and (\protect{\ref{dist B, Bgamma}})
  in (\ref{eq:triangMMinfty}), concludes the proof.
\hfill

\appendix
\section{Moment bound for Poisson variables}
\label{sec:Poissonbound}
By following closely Chapter 2 in \cite{BLM}, we show hereafter a moment bound
for the maximum of $n$ Poisson variables. (Notice that, contrary to Exercise
2.18 in \cite{BLM} we do not assume here that the Poisson variables are
independent.)
\begin{proposition}
  \label{prop:momentPoisson}
  Let $n\in\N$ and let $X_i,\, i=1,\cdots,n$ be Poisson random variables of
  parameter~$\nu$. Then for some $c$ depending only on $\nu$ we have that
  \begin{equation}
    \label{eq:defphi}
    \esp{\max_{i=1,\cdots,n}X_i} \le c\, {\log n \over \log\log n}\cdotp
  \end{equation}
\end{proposition}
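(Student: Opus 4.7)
The plan is to use the classical moment generating function (MGF) approach for maxima, which works without any independence assumption on the $X_i$'s (as required here).

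The first step is, for any $\lambda>0$, to apply Jensen's inequality in the form
\begin{equation*}
\exp\left(\lambda\,\esp{\max_{i=1,\cdots,n} X_i}\right) \le \esp{\exp\left(\lambda \max_{i=1,\cdots,n} X_i\right)} = \esp{\max_{i=1,\cdots,n} e^{\lambda X_i}} \le \sum_{i=1}^{n}\esp{e^{\lambda X_i}}.
\end{equation*}
Since each $X_i$ is Poisson of parameter $\nu$, $\esp{e^{\lambda X_i}}=\exp(\nu(e^{\lambda}-1))$, so taking logarithms and dividing by $\lambda$ yields
\begin{equation*}
\esp{\max_{i=1,\cdots,n} X_i} \le \frac{\log n + \nu(e^\lambda - 1)}{\lambda}, \qquad \lambda>0.
\end{equation*}

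The second step is to optimize this upper bound in $\lambda$. The two contributions $\log n/\lambda$ and $\nu e^{\lambda}/\lambda$ balance when $\nu e^\lambda$ is of order $\log n$, which suggests the choice $\lambda = \log \log n$ (valid as soon as $n\ge 3$). With that choice $e^\lambda = \log n$, so
\begin{equation*}
\esp{\max_{i=1,\cdots,n} X_i} \le \frac{\log n + \nu(\log n - 1)}{\log \log n} \le (1+\nu)\,\frac{\log n}{\log \log n}.
\end{equation*}
The finitely many small values of $n$ (those with $\log \log n \le 1$) are absorbed by enlarging the constant $c$, since for each such $n$ the trivial bound $\esp{\max_{i} X_i}\le \sum_i \esp{X_i}=n\nu$ is finite.

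There is no real obstacle here; the only delicate point is the choice of $\lambda$. The Poisson MGF grows like a double exponential, so $\lambda$ must not be too large; the logarithmic scale $\lambda\sim \log\log n$ is the only choice that balances the two terms and produces the known (and asymptotically sharp, already in the i.i.d.\ case) rate $\log n /\log \log n$.
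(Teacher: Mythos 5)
Your proof is correct and rests on the same core idea as the paper's: the maximal inequality $\exp\bigl(\lambda\,\esp{\max_i X_i}\bigr)\le\sum_i\esp{e^{\lambda X_i}}$ obtained by combining Jensen's inequality with the union bound, which indeed requires no independence. Where you diverge is in the optimization of the resulting bound $\lambda\mapsto(\log n+\nu(e^\lambda-1))/\lambda$: the paper works with the centered variables $Z_i=X_i-\nu$, solves the minimization \emph{exactly} in terms of the Lambert $W$ function, and then invokes the lower bound $W(z)\ge\log z-\log\log z$ to extract the $\log n/\log\log n$ rate. You instead make the explicit near-optimal choice $\lambda=\log\log n$, which immediately yields $\esp{\max_i X_i}\le(1+\nu)\log n/\log\log n$ for $n\ge3$, absorbing the finitely many small $n$ into the constant. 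Your route is more elementary and gives a cleaner explicit constant, at the (negligible for this purpose) cost of not being tight at the level of lower-order terms; the paper's exact-optimization route is heavier but would matter if one wanted the sharp asymptotic constant.
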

\begin{proof}
  Denote for all $i$, $Z_i=X_i-\nu$, and by $\Psi_{Z_i}$ the moment generating
  function of $Z_i$. By Jensen's inequality and the monotonicity of $\exp(.)$ we
  get that
  \[
    \exp\left(u\esp{\max_{i=1,\cdots,n}Z_i}\right)\leq\esp{\max_{i=1,\cdots,n}\exp(uZ_i)}\leq\sum_{i=1}^n\esp{\exp(uZ_1)}\leq
    n\exp\left(\Psi_{Z_i}(u)\right).\] After a quick algebra, this readily
  implies that
  \begin{align*}
    \esp{\max_{i=1,\cdots,n}Z_i}\leq \inf_{u\in\R}\left(\frac{\log n+\nu\left(e^u-u-1\right)}{u}\right)
    = \frac{\log n+\nu\left(e \frac{a}{W(a)}-1-W(a)-1\right)}{1+W(a)},
  \end{align*}
  where $W$ is the so-called Lambert function, solving the equation
  $W(x)e^{W(x)}=x$ over $[-1/e,\infty]$, and $a=\frac{\log(n/e^{\nu})}{e^\nu}$.
  This entails in turn that
$$\esp{\max_{i=1,\cdots,n}X_i}\leq \nu e \frac{a}{W(a)}-\nu+\nu=\frac{\log{(n/e^{\nu})}}{W(\log(n/e^{\nu})/e^{\nu})}\cdotp$$
We conclude by observing that $W(z)\geq \log (z) - \log\log(z)$ for all $z >e$.
Therefore there exists $c>0$ such that for $n\geq \exp
\left(e^{\nu+1}+\nu\right)$,
\[\esp{\max_{i=1,\cdots,n}X_i}
  \leq\frac{\log{(n/e^{\nu})}}{\log(\log(n/e^{\nu})/e^{\nu})-\log\log(\log(n/e^{\nu})/e^{\nu})}
  \le c\, {\log n \over \log\log n},\] which completes the proof.
 \end{proof}

\section{Proof of Proposition \protect{\ref{prop:ui}}}
\label{subsec:proofMMinfty}
Fix $n$ throughout this section, and
denote for all $i=0,...,n-1$ and $(x,z) \in\R^2$,
\[\alpha_{i}(x,z)=\mathbf 1_{C_{{t^n_i}}}(x,z),\quad\quad
  \beta_i(x,z)=\int_{t^n_i}^{t^n_{i+1}}\mathbf{1}_{C_{u}}(x,z)\du.\]

\begin{proof}[Proof of (i)]
  Recall (\protect{\ref{eq:defuMMinfty}}), and fix two indexes $0\le i <j \le
  n-1$. We have that
  \begin{multline}
    \label{eq:ortho0}
    \int\int u^{\sharp}_iu^{\sharp}_j\dnn
    = \int\int \left(\alpha_{i+1}- \alpha_i\right)\left(\alpha_{j+1}-\alpha_j \right)\dnn\\
    +\mu \int\int\beta_i\left(\alpha_{j+1}-\alpha_j\right) \dnn +\mu
    \int\int\beta_j\left(\alpha_{i+1}-\alpha_i\right)\dnn
    \shoveright{+\mu^2 \int\int\beta_j\beta_j\dnn}\\
    =: I_1 +I_2 +I_3+I_4,
  \end{multline}
  where straightforward computations show that
  \begin{align*}
    I_1
    &=\lambda n\left(2 e^{-\mu(t^n_j-t^n_i)} - e^{-\mu(t^n_j-t^n_{i+1})} - e^{-\mu(t^n_j-t^n_{i-1})}\right);\\
    I_2 
    &={\lambda n\over \mu}\left(2 e^{-\mu(t^n_j-t^n_i)} - e^{-\mu(t^n_j-t^n_{i+1})} - e^{-\mu(t^n_j-t^n_{i-1})}\right) - \lambda \left(e^{-\mu t_{j+1}^n} - e^{-\mu t^n_j}\right);\\
    I_3 
    &={\lambda n\over \mu}\left(-2 e^{-\mu(t^n_j-t^n_i)} + e^{-\mu(t^n_j-t^n_{i+1})} + e^{-\mu(t^n_j-t^n_{i-1})}\right);\\
    I_4 
    &={\lambda n\over \mu}\left(-2 e^{-\mu(t^n_j-t^n_i)} + e^{-\mu(t^n_j-t^n_{i+1})} + e^{-\mu(t^n_j-t^n_{i-1})}\right)+ \lambda \left(e^{-\mu t_{j+1}^n} - e^{-\mu t^n_j}\right).
  \end{align*}
  Adding up the above in (\protect{\ref{eq:ortho0}}) yields the result.\hfill\qed \end{proof}

\begin{proof}[Proof of (ii)]
  For all $0 \le i,j,k \le n-1$ we write
  \begin{multline}
    \label{eq:I}
    I_{i,j,k} :=\int_{\R^2}
    |u^{\sharp}_iu^{\sharp}_{j}u^{\sharp}_{k}|\dnn \leq \int \left|(\alpha_{i+1} - \alpha_{i})(\alpha_{j+1} - \alpha_{j})(\alpha_{k+1}-\alpha_{k})\right| \dnn\\
    + \int \left|(\alpha_{i+1} - \alpha_i)(\alpha_{j+1} -
      \alpha_j)\mu\beta_k\right|\dnn + \int \left|(\alpha_{j+1} -
      \alpha_j)(\alpha_{k+1} - \alpha_k)\mu\beta_i\right|\dnn\\ + \int
    \left|(\alpha_{i+1} - \alpha_i)(\alpha_{k+1} -
      \alpha_k)\mu\beta_j\right|\dnn
    +\int \left|\left(\alpha_{i+1} - \alpha_i\right)\mu^2\beta_j\beta_k\right| \dnn\\
    +\int \left|\left(\alpha_{j+1} - \alpha_j\right)\mu^2\beta_i\beta_k\right|
    \dnn
    +\int \left|\left(\alpha_{k+1} - \alpha_k\right)\mu^2\beta_i\beta_j\right| \dnn\\
    +\int \left|\mu^3\beta_i\beta_j\beta_k\right| \dnn =: \sum_{l=1}^8
    I_{i,j,k}^l.
  \end{multline}
  It can be easily retrieved that
  \begin{align*}
    I_{i,i,i}^1 &=n\left(\frac{\lambda}{n}-\frac{\lambda}{\mu}\left(1-e^{-\frac{\mu T}{n}}\right)\left(1-e^{\mu T{i+1 \over n}}\right)\right)\leq {\lambda \over \mu };\\
    I_{i,j,k}^1 &=0,\quad 1\le i<j<k \le n;\\
    I_{i,i,k}^1 &={\lambda n\over \mu }\left(e^{\mu t^n_{i+1}} - e^{\mu t^n_{i}}\right)\left(e^{-\mu t^n_{k}} - e^{-\mu t^n_{k+1}} \right) \leq {\lambda T^2\over \mu n},\quad i=j<k,
  \end{align*}
  and the other cases can be treated similarly. Also, simple computations show
  that if $i<j$,
  \begin{equation*}
    \mu\int \left|(\alpha_{i+1} - \alpha_i)(\alpha_{j+1} - \alpha_j)\beta_k\right| \dnn
    \leq\lambda\left(e^{\mu t^n_{i+1}} - e^{\mu t^n_i}\right)\left(e^{-\mu t^n_j} - e^{-\mu t^n_{j+1}} \right) \leq {\lambda T^2\over n^2},
  \end{equation*}
  whereas if $i=j$, the above integral is upper bounded by
  \[2\lambda T\left(2+e^{-\mu t^n_{i+1}}-e^{-\mu t^n_i}-2e^{-\frac{\mu T}{n}}\right)\leq\frac{2\lambda\mu T^2}{n}T^2.\]
  It readily follows that in all cases, $I_{i,j,k}^2,I_{i,j,k}^3$ and
  $I_{i,j,k}^4$ are less than ${c\, n^{-1}}$ for some constant $c$.
  \noindent Reasoning similarly, we also obtain that for all $i,j,k$,
  \begin{equation*}
    \mu^2\int \left|(\alpha_{i+1} - \alpha_i)\mu^2\beta_j\beta_k\right| \dnn\leq\frac{\mu^2}{n}\left(\frac{\lambda }{n}-\frac{\lambda}{\mu}\left(1-e^{-\frac{\mu T}{n}}\right)\left(1-e^{\mu T{i+1 \over n}}\right)\right)\leq {\lambda \over \mu n^2}T,
  \end{equation*}
  so that in all cases the $I_{i,j,k}^5,I_{i,j,k}^6$ and $I_{i,j,k}^7$'s are
  less than ${c \, n^{-2}}$ for some $c$. Finally, observing that for all
  $u,v,w$,
$$\int\int\mathbf{1}_{C_u}\mathbf{1}_{C_v}\mathbf{1}_{C_w}\lambda\mu e^{-\mu y}\dx\dy=\frac{\lambda}{\mu}(e^{-\mu (\max(u,v,w)-\min(u,v,w))}-e^{-\mu \max(u,v,w)})$$
we can similarly bound $I_{i,j,k}^8$ by a $c\,{n^{-2}}$ for all $i,j,k$.
To summarize, all the $I_{i,j,k}$'s are less than $c\,{n^{-2}}$ for some $c$,
except for the $I^1_{i,i,i}$'s, $i=1,...,n$, which are bounded by a constant but
are only $n$ in number, and all terms where one index appears twice, which are
less than $c\,n^{-1}$ for some $c$, but are only $n^2$ in number. Hence (ii).

\hfill\qed \end{proof}
\begin{proof}[Proof of (iii)]
  We have for all $0\le i \le n-1$,
  \begin{multline}
    \label{eq:normes_u}
    \int\int u^{\sharp}_iu^{\sharp}_i\dnn
    =\int\int\alpha_{i+1}\dnn+\int\int\alpha_{i}\dnn-2
    \int\int\alpha_{i+1}\alpha_{i}\dnn\\+2\mu
    \int\int\beta_i\alpha_{i+1}\dnn-2\mu
    \int\int\beta_i\alpha_{i}\dnn+\mu^2\int\int\beta_i\beta_i\dnn\\=J_1+J_2+J_3+J_4+J_5+J_6,
  \end{multline}
  where straightforward calculations show that
  \begin{align*}
    J_1 &=\frac{\lambda n }{\mu}\left(1-e^{-\mu t^n_{i+1}}\right);\quad J_2=\frac{\lambda n}{\mu}\left(1-e^{-\mu t^n_i}\right);\\
    J_3 &=-2\frac{\lambda n}{\mu}\left(e^{-\frac{\mu T}{n}}-e^{-\mu t^n_{i+1}}\right);\quad J_4 =2\frac{\lambda n}{\mu}(1-e^{-\frac{\mu T}{n}})-2\lambda e^{-\mu t^n_{i+1}};\\
    J_5 &=-2\frac{\lambda n}{\mu}(1-e^{-\frac{\mu T}{n}})-2\frac{\lambda n}{\mu}(e^{-\mu t^n_{i+1}}-e^{-\mu t^n_i});\\
    J_6 &=\lambda\left(2+2e^{-\mu t^{n}_{i+1}}+\frac{2n}{\mu}(e^{-\mu t_{i+1}^{n}}-e^{-\mu t^n_i}+e^{\frac{-\mu T}{n}}-1)\right).
  \end{align*}
  Recalling (\ref{eq:defgamma}), adding up the $J_j$'s, $j=1,...,6$, concludes
  the proof.
\hfill\qed \end{proof}


\end{document}